\documentclass{amsart}
\usepackage[latin9]{inputenc}
\usepackage{amsthm}
\usepackage{amsmath}
\usepackage{amssymb}
\usepackage{esint}

\makeatletter
\theoremstyle{plain}
\newtheorem{thm}{\protect\theoremname}
  \theoremstyle{remark}
  \newtheorem{rem}[thm]{\protect\remarkname}
  \theoremstyle{plain}
  \newtheorem{lem}[thm]{\protect\lemmaname}
  \theoremstyle{definition}
  \newtheorem{defn}[thm]{\protect\definitionname}
  \theoremstyle{plain}
  \newtheorem{prop}[thm]{\protect\propositionname}


\usepackage{mathrsfs}
\usepackage{amsthm}
\usepackage{amsfonts}


\newcommand{\eps}{\varepsilon}

\makeatother

  \providecommand{\definitionname}{Definition}
  \providecommand{\lemmaname}{Lemma}
  \providecommand{\propositionname}{Proposition}
  \providecommand{\remarkname}{Remark}
\providecommand{\theoremname}{Theorem}

\begin{document}

\title[Low energy solutions]{Low energy solutions for singularly perturbed coupled nonlinear systems on a Riemannian
manifold with boundary.}

\author{Marco Ghimenti}
\address[Marco Ghimenti] {Dipartimento di Matematica,
  Universit\`{a} di Pisa, via F. Buonarroti 1/c, 56127 Pisa, Italy}
\email{marco.ghimenti@dma.unipi.it.}
\author{Anna Maria Micheletti}
\address[Anna Maria Micheletti] {Dipartimento di Matematica,
  Universit\`{a} di Pisa, via F. Buonarroti 1/c, 56127 Pisa, Italy}
\email{a.micheletti@dma.unipi.it.}
\thanks{The authors were partially supported by 2014 GNAMPA project:
   ``Equazioni di campo non-lineari: solitoni e dispersione''. }
   \begin{abstract}
Let $(M,g)$ be a smooth, compact Riemannian manifold with smooth
boundary, with $n=\dim M=2,3$. We suppose the boundary $\partial M$
to be a smooth submanifold of $M$ with dimension $n-1$. We consider
a singularly perturbed nonlinear system, namely Klein-Gordon-Maxwell-Proca
system, or Klein-Gordon-Maxwell system of Scrhoedinger-Maxwell system
on $M$. We prove that the number of low energy solutions, when the
perturbation parameter is small, depends on the topological properties
of the boundary $\partial M$, by means of the Lusternik Schnirelmann
category. Also, these solutions have a unique maximum point that lies
on the boundary.
\end{abstract}
\subjclass[2010]{35J60, 35J20,53C80,81V10}
\keywords{Riemannian manifolds with boundary, Klein-Gordon-Maxwell
systems, Scrhoedinger-Maxwell systems, Lusternik Schnirelmann category,
one peak solutions}
\maketitle

\section{Introduction}

Let $(M,g)$ be a smooth, compact Riemannian manifold with smooth
boundary, with $n=\dim M=2,3$. We suppose the boundary $\partial M$
to be a smooth submanifold of $M$ with dimension $n-1$.

We consider the following singularly perturbed electrostatic Klein-Gordon-Maxwell-Proca
(shortly KGMP) system on $M$ with Neumann boundary condition

\begin{equation}
\left\{ \begin{array}{cc}
-\varepsilon^{2}\Delta_{g}u+au=|u|^{p-2}u+\omega^{2}(qv-1)^{2}u & \text{ in }M\\
-\Delta_{g}v+(1+q^{2}u^{2})v=qu^{2} & \text{ in }M\\
\frac{\partial u}{\partial\nu}=0,\ \frac{\partial v}{\partial\nu}=0 & \text{ on }\partial M
\end{array}\right.\label{eq:kgmps}
\end{equation}
Here $\varepsilon>0$, $a>0$, $q>0$, $\omega\in(-\sqrt{a},\sqrt{a})$
and $4\le p<2^{*}$ being $2^{*}=6$ for $n=3$ or $2^{*}=+\infty$
for $n=2$. 

The Neumann condition for the function $u$ is interesting since it
shows how the topological properties of the boundary influence the
number of solutions of (\ref{eq:kgmps}). Moreover from a physical
viewpoint, give a Neumann condition for the second function $v$ corresponds
to fix the electrical field on $\partial M$ which is a natural condition
(for a more detailed discussion on this topic, we refer to \cite{DPS2,DPS1}). 

The study of KGMP systems recently has known a rise of interest in
the mathematical community. In \cite{GMkg,HT,HW} equation (\ref{eq:kgmps})
has been studied on a Riemaniann boundariless manifold $M$. A similar
problem has been considered in a flat domain $\Omega$ by D'Aprile
and Wei \cite{DW1,DW2}. In the context of flat domains, moreover,
many authors have dealt with Klein Gordon Maxwell systems without
singular perturbation in the Laplacian term \cite{AP2,BF,C,DM04,DP,Mu}. 

In this paper we prove the following result.
\begin{thm}
\label{thm:main}For $\varepsilon$ small enough the KGMP system (\ref{eq:kgmps})
has at least $\textup{cat}\partial M$ non constant distinct solutions 
$(u_\varepsilon,v_\varepsilon)$ with low energy.
Here $\textup{cat}\partial M$ is the Lusternik Schnirelmann category.
Moreover the functions $u_\varepsilon$ have a unique maximum
point $P_{\varepsilon}\in\partial M$ and $u_{\varepsilon}=Z_{\varepsilon,P_{\varepsilon}}+\Psi_{\varepsilon}$
where $Z_{\varepsilon,P_{\varepsilon}}$ is defined in (\ref{zeq})
and $\|\Psi_{\varepsilon}\|_{L^{\infty}(M)}\rightarrow0$.
 \end{thm}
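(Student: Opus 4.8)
The plan is to recast the system \eqref{eq:kgmps} as a single nonlocal equation and then apply a Lyapunov--Schmidt-type reduction together with a Lusternik--Schnirelmann argument on the boundary. First I would solve the second equation for $v$ in terms of $u$: for fixed $u$, the operator $v \mapsto -\Delta_g v + (1+q^2u^2)v - qu^2$ is strictly monotone and coercive on $H^1_g(M)$ under the Neumann condition, so there is a unique solution $v_u$, satisfying the a priori bounds $0 \le q v_u \le 1$. Substituting $v = v_u$ back, \eqref{eq:kgmps} reduces to a single equation for $u$ whose solutions are the critical points of an energy functional $I_\varepsilon$ on $H^1_g(M)$; the coupling contributes a nonlocal, lower-order term that I expect to behave as a small perturbation of the pure power problem. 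After rescaling at scale $\varepsilon$ about a point, the model problem is the least-energy equation $-\Delta U + aU = U^{p-1}$ on the half-space $\mathbb{R}^n_+$ with Neumann condition, whose ground-state level I call $m_\infty$; the approximate solution $Z_{\varepsilon,P}$ of \eqref{zeq} is the rescaled ground state centred at a boundary point $P \in \partial M$.

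The heart of the argument is the \emph{photography} (barycentre) method. On the Nehari manifold $\mathcal{N}_\varepsilon$ associated with $I_\varepsilon$ I would build two maps: a map $\Phi_\varepsilon : \partial M \to \mathcal{N}_\varepsilon$ sending $P$ to the Nehari projection of $Z_{\varepsilon,P}$, and a barycentre map $\beta : \mathcal{N}_\varepsilon \to M$ which, read through a fixed isometric embedding $M \hookrightarrow \mathbb{R}^N$, assigns to $u$ its $L^p$-weighted centre of mass $\int_M x\,|u|^p\,d\mu_g \big/ \int_M |u|^p\,d\mu_g$ followed by a nearest-point retraction onto a collar of $\partial M$. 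The key energy estimate is that $I_\varepsilon(\Phi_\varepsilon(P)) \to m_\infty$ uniformly in $P$ as $\varepsilon \to 0$, so that $\Phi_\varepsilon(\partial M)$ lies in a sublevel set $\mathcal{N}_\varepsilon^{m_\infty + \delta_\varepsilon}$ with $\delta_\varepsilon \to 0$. Conversely, a concentration--compactness analysis should show that any $u \in \mathcal{N}_\varepsilon$ with $I_\varepsilon(u)$ close to $m_\infty$ must concentrate near a single boundary point, forcing $\beta(u)$ into a neighbourhood of $\partial M$ up to a vanishing error; this uses crucially that interior concentration costs energy $2m_\infty$, twice the boundary cost, so low-energy solutions are pushed to $\partial M$.

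With these ingredients in place, I would verify that $\beta \circ \Phi_\varepsilon$ is homotopic to the inclusion of $\partial M$ (composed with the retraction) for $\varepsilon$ small. Standard Lusternik--Schnirelmann theory then yields that $I_\varepsilon$ constrained to $\mathcal{N}_\varepsilon$ has at least $\operatorname{cat}\partial M$ critical points in $\mathcal{N}_\varepsilon^{m_\infty+\delta_\varepsilon}$, provided $I_\varepsilon$ satisfies the Palais--Smale condition in that range. The latter I would establish through a splitting lemma that rules out escaping or spreading mass, again invoking the energy gap between one-bubble boundary solutions and any competitor. Each such constrained critical point is a genuine solution $(u_\varepsilon, v_{u_\varepsilon})$ of \eqref{eq:kgmps}, and these solutions are non-constant, since constant functions do not belong to the low sublevel set.

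Finally I would extract the asymptotic profile. By the concentration analysis, each low-energy $u_\varepsilon$ satisfies $u_\varepsilon = Z_{\varepsilon,P_\varepsilon} + \Psi_\varepsilon$ with $P_\varepsilon \in M$ tending to $\partial M$ and $\Psi_\varepsilon$ small in the natural rescaled Sobolev norm; elliptic regularity and Moser iteration upgrade this to $\|\Psi_\varepsilon\|_{L^\infty(M)} \to 0$. Since $Z_{\varepsilon,P_\varepsilon}$ has a strict maximum at $P_\varepsilon$ and the remainder is uniformly small, $u_\varepsilon$ has a unique maximum point $P_\varepsilon$, which lies on $\partial M$. The main obstacle I anticipate is controlling the nonlocal coupling term uniformly in $\varepsilon$, both in proving the sharp expansion $I_\varepsilon(\Phi_\varepsilon(P)) = m_\infty + o(1)$ and in ensuring that this term does not disrupt the Palais--Smale analysis or the barycentre estimate; handling the half-space Neumann ground state and its non-degeneracy is the other delicate point.
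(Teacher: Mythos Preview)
Your proposal is correct and follows essentially the same route as the paper: reduce to a single nonlocal equation via the map $u\mapsto v_u$, work on the Nehari manifold, build the maps $\Phi_\varepsilon$ and $\beta$, show $\beta\circ\Phi_\varepsilon$ is homotopic to the inclusion, and invoke Lusternik--Schnirelmann category together with the interior-versus-boundary energy gap $m_\infty=2m_e^+$; the profile description is also handled the same way. Two small corrections: the limit equation on the half-space has mass $a-\omega^2$, not $a$ (since $qv_u\to 0$ in the blow-up, the term $\omega^2(qv-1)^2u$ contributes $\omega^2 u$ at leading order); and neither a Lyapunov--Schmidt reduction nor non-degeneracy of the half-space ground state is needed here---the argument is purely variational/topological on the Nehari constraint, so you can drop that concern.
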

\begin{rem}
We notice that the same result can be obtained verbatim for the electrostatic
Klein-Gordon-Maxwell (shortly KGM) system with Neumann/Dirichlet boundary
condition, 
\begin{equation}
\left\{ \begin{array}{cc}
-\varepsilon^{2}\Delta_{g}u+au=|u|^{p-2}u+\omega^{2}(qv-1)^{2}u & \text{ in }M\\
-\Delta_{g}v+q^{2}u^{2}v=qu^{2} & \text{ in }M\\
\frac{\partial u}{\partial\nu}=0,\ v=0 & \text{ on }\partial M
\end{array}\right.\label{eq:kgms}
\end{equation}
and for the Schroedinger-Maxwell system with Neumann/Dirichlet boundary
condition, for $\varepsilon>0$, $a>0$, $q>0$, $\omega\in\mathbb{R}$
and $4<p<2^{*}$ 
\begin{equation}
\left\{ \begin{array}{cc}
-\varepsilon^{2}\Delta_{g}u+au+\omega uv=|u|^{p-2}u & \text{ in }M\\
-\Delta_{g}v=qu^{2} & \text{ in }M\\
\frac{\partial u}{\partial\nu}=0,\ v=0 & \text{ on }\partial M
\end{array}\right.\label{eq:sms}
\end{equation}
We explicitly treat systems (\ref{eq:kgmps}) and (\ref{eq:kgms})
in the paper, pointing out the differences in the proofs whenever
necessary. For system (\ref{eq:sms}) the estimates are easier and
left to the reader. We just mention that we have to rule out the case
$p=4$ in order to have a smooth Nehari manifold (cfr. section \ref{sec:Nehari})
\end{rem}
\begin{rem}
The result of this paper relies on the topology of the boundary $\partial M$. In a forthcoming paper 
the authors will point out how the geometry of $\partial M$ affects the number of one peaked solutions.
\end{rem}
The paper is structured as follows: in Section \ref{sec:Preliminaries}
some basic concepts are recalled and it is introduced the variational
structure of the problem. The Nehari manifold that is a natural constraint
for the variational problem is introduced in Section \ref{sec:Nehari}.
Section \ref{sec:Strategy} contains the lines of the proof of Theorem
\ref{thm:main}, while in sections \ref{sec:phi}, \ref{sec:Concentration}
and \ref{sec:beta} the steps of the proof are explained in full details.
The profile description is contained in 
Section \ref{sec:Profile-description}. Some technical result is postponed
in Section \ref{sec:Proof-of-technical} to do not overload the presentation
of the results.

\section{\label{sec:Preliminaries}Preliminaries }

We recall some well know result on Riemaniann manifold with boundary.
At first we introduce a coordinates system for a neighborhood of the
boundary $\partial M$.

If $\xi$ belongs to $\partial M$, let $\bar{y}=\left(y_{1},\dots,y_{n-1}\right)$
be Riemannian normal coordinates on the $n-1$ manifold $\partial M$
at the point $\xi$. For a point $x\in M$ close to $\xi$, there
exists a unique $\bar{x}\in\partial M$ such that $d_{g}(x,\partial{ \mathcal M})=d_{g}(x,\bar{x})$.
We set $\bar{y}(x)\in\mathbb{R}^{n-1}$ the normal coordinates for
$\bar{x}$ and $y_{n}(x)=d_{g}(x,\partial{ \mathcal M})$. Then we define
a chart $\Psi_{\xi}^{\partial}:\mathbb{R}_{+}^{n}\rightarrow M$ such
that $\left(\bar{y}(x),y_{n}(x)\right)=\left(\Psi_{\xi}^{\partial}\right)^{-1}(x)$.
These coordinates are called \emph{Fermi coordinates} at $\xi\in\partial M$. 

We note by $d_{g}^{\partial}$ and $\exp^{\partial}$ respectively
the geodesic distance and the exponential map on by $\partial M$.

We define the following neighborhood of a point $\xi\in\partial M$
\[
I_{\xi}(\rho,R)=\left\{ x\in M:\ y_{n}=d_{g}(x,\partial M)<\rho\text{ and }\left\vert \bar{y}\right\vert =d_{g}^{\partial}\left(\exp_{q}^{\partial}(\bar{y}(x)),\xi\right)<R\right\} .
\]
where $R,\rho>0$ are smaller than the injectivity radius of $M$.
Often we will denote $I_{\xi}(R)=I_{\xi}(R,R)$ and, if no ambiguity
is present, we simply use $I_{\xi}$ for $I_{\xi}(R,\rho)$ or for
$I_{\xi}(R)$ .

Let $\mathbb{R}_{+}^{n}=\left\{ y=(\bar{y},y_{n}):\bar{y}\in\mathbb{R}^{n-1},y_{n}\geq0\right\} $.
It is well known that there exists a least energy solution $V\in H^{1}(\mathbb{R}_{+}^{n})$
of the equation 
\begin{equation}
\left\{ \begin{array}{cl}
-\Delta V+(a-\omega^{2})V=|V|^{p-2}V,\text{ }V>0 & \text{ on }\mathbb{R}_{+}^{n}\\
\frac{\partial V}{\partial y_{n}}|_{(\bar{y},0)}=0.
\end{array}\right.\label{eq:V}
\end{equation}
We remark that, set $U$ the least energy solution of 
\begin{equation}
\left\{ \begin{array}{cl}
-\Delta U+(a-\omega^{2})U=|U|^{p-2}U,\text{ }U>0 & \text{ on }\mathbb{R}^{n}\\
U\in H^{1}(\mathbb{R}^{n})
\end{array}\right.\label{eq:U}
\end{equation}
which is radially symmetric, we have that $V=\left.U\right|_{y_{n}\ge0}$.

Set $V_{\varepsilon}(y)=V\left(\frac{y}{\varepsilon}\right)$, and
fixed $\xi\in\partial M$ we define the function $Z_{\varepsilon,\xi}(x)$
as 
\begin{equation}
Z_{\varepsilon,\xi}(x)=\left\{ \begin{array}{cl}
V_{\varepsilon}\left(y(x)\right)\chi_{R}\left(|\bar{y}(x)|\right)\chi_{\rho}\left(y_{n}(x)\right) & \text{if }x\in I_{\xi}\\
\\
0 & \text{otherwise}
\end{array}\right.\label{zeq}
\end{equation}
where $\chi_{T}:\mathbb{R}^{+}:\rightarrow[0,1]$ is a smooth cut
off function such that $\chi_{T}(s)\equiv1$ for $0\le s\le T/2$,
$\chi_{R}(s)\equiv0$ for $s\ge T$ and $|\tilde{\chi}'_{T}(s)|\le1/T$.

We endow $H_{g}^{1}(M)$ with the scalar product and norm 
\[
{\displaystyle \left\langle u,v\right\rangle _{\varepsilon}:=\frac{1}{\varepsilon^{n}}\int_{M}\varepsilon^{2}\nabla_{g}u\nabla_{g}v+(a-\omega^{2})uvd\mu_{g};\ \ \ \|u\|_{\varepsilon}=\left\langle u,u\right\rangle _{\varepsilon}^{1/2}.}
\]
We call $H_{\varepsilon}$ the space $H_{g}^{1}$ equipped with the
norm $\|\cdot\|_{\varepsilon}$. We also define $L_{\varepsilon}^{p}$
as the space $L_{g}^{p}(M)$ endowed with the norm ${\displaystyle |u|_{\varepsilon,p}=\frac{1}{\varepsilon^{n}}\left(\int_{M}u^{p}d\mu_{g}\right)^{1/p}}$.
We also use the obvious notation $H_{0,\varepsilon}$ for the space
$H_{0,g}^{1}$ with the norm $\|\cdot\|_{\varepsilon}$, where $H_{g}^{1}$
(resp. $H_{0,g}^{1}$) is the closure of $ $$C^{\infty}(M)$ (resp.
$C_{0}^{\infty}(M)$) with respect to the norm $\int_{M}|\nabla_{g}u|^{2}+u^{2}$
(resp. $\int_{M}|\nabla_{g}u|^{2}$).

\subsection{The function $\psi$}

First of all, we reduce the system to a single equation. In order
to overcome the problems given by the competition between $u$ and
$v$, using an idea of Benci and Fortunato \cite{BF}, we introduce
the map $\psi$ defined by the equation
\begin{equation}
\left\{ \begin{array}{cc}
-\Delta_{g}\psi+(1+q^{2}u^{2})\psi=qu^{2} & \text{ in }M\\
\frac{\partial\psi}{\partial\nu}=0 & \text{on }\partial M
\end{array}\right.\label{eq:ei-N}
\end{equation}
in case of Neumann boundary condition or by
\begin{equation}
\left\{ \begin{array}{cc}
-\Delta_{g}\psi+qu^{2}\psi=qu^{2} & \text{ in }M\\
\psi=0 & \text{on }\partial M
\end{array}\right.\label{eq:ei-D}
\end{equation}
in case of Dirichlet boundary condition. 

In what follows we call $H=H_{g}^{1}$ for the Neumann problem and
$H=H_{0,g}^{1}$ for the Dirichelt problem. Thus with abuse of language
we will say that $\psi:H\rightarrow H$ in both (\ref{eq:ei-N}) and
(\ref{eq:ei-D}). Moreover, from standard variational arguments, it
easy to see that $\psi$ is well-defined in $H$ and it holds 
\begin{equation}
0\le\psi(u)\le1/q\label{psipos}
\end{equation}
for all $u\in H$. 
\begin{lem}
\label{lem:e1}The map $\psi:H\rightarrow H$ is $C^{2}$ and its
differential $\psi'(u)[h]=V_{u}[h]$ at $u$ is the map defined by
\begin{equation}
-\Delta_{g}V_{u}[h]+(1+q^{2}u^{2})V_{u}[h]=2qu(1-q\psi(u))h\text{ for all }h\in H.\label{eq:e2}
\end{equation}
in case of Neumann boundary condition or 
\begin{equation}
-\Delta_{g}V_{u}[h]+q^{2}u^{2}V_{u}[h]=2qu(1-q\psi(u))h\text{ for all }h\in H.\label{eq:e2-2}
\end{equation}
in case of Dirichlet boundary condition.

Also, we have 
\[
0\le\psi'(u)[u]\le\frac{2}{q}.
\]
Finally, the second derivative $(h,k)\rightarrow\psi''(u)[h,k]=T_{u}(h,k)$
is the map defined by the equation 
\[
-\Delta_{g}T_{u}(h,k)+(1+q^{2}u^{2})T_{u}(h,k)=-2q^{2}u(kV_{u}(h)+hV_{u}(k))+2q(1-q\psi(u))hk
\]
in case of Neumann boundary condition or
\[
-\Delta_{g}T_{u}(h,k)+q^{2}u^{2}T_{u}(h,k)=-2q^{2}u(kV_{u}(h)+hV_{u}(k))+2q(1-q\psi(u))hk
\]
in case of Dirichlet boundary condition.
\end{lem}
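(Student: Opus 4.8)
The plan is to realize $\psi$ as an implicitly defined map and apply the Implicit Function Theorem in the Hilbert space $H$. In the Neumann case I would introduce the map $G\colon H\times H\to H^{\ast}$ given by
\[
\langle G(u,\psi),\varphi\rangle=\int_{M}\nabla_{g}\psi\cdot\nabla_{g}\varphi+(1+q^{2}u^{2})\psi\varphi-qu^{2}\varphi\,d\mu_{g},\qquad\varphi\in H,
\]
so that $\psi$ solves (\ref{eq:ei-N}) weakly if and only if $G(u,\psi)=0$ (in the Dirichlet case one replaces the coefficient by the one appearing in (\ref{eq:ei-D}) and works in $H=H_{0,g}^{1}$). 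The only point requiring care is the smoothness of $G$, which reduces to checking that the polynomial terms define bounded multilinear forms on $H$: since $n\le3$, the embedding $H\hookrightarrow L^{6}$ together with H\"older's inequality, e.g. $\int_{M}u^{2}\psi\varphi\,d\mu_{g}\le\|u\|_{L^{6}}^{2}\|\psi\|_{L^{6}}\|\varphi\|_{L^{6}}$, shows that the cubic term is a bounded trilinear form; hence $G$ is in fact $C^{\infty}$.

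Next I would establish existence, uniqueness and regularity of $\psi$. For fixed $u$ the bilinear form $B_{u}(\psi,\varphi)=\int_{M}\nabla_{g}\psi\cdot\nabla_{g}\varphi+(1+q^{2}u^{2})\psi\varphi\,d\mu_{g}$ is continuous and coercive on $H$ (coercivity comes from the lower bound $1+q^{2}u^{2}\ge1$ in the Neumann case, and from the Poincar\'e inequality on $H_{0,g}^{1}$ in the Dirichlet case), while $\varphi\mapsto\int_{M}qu^{2}\varphi\,d\mu_{g}$ is a bounded functional; Lax--Milgram then produces a unique $\psi(u)\in H$ with $G(u,\psi(u))=0$. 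Since the partial differential $\partial_{\psi}G(u,\psi)$ is precisely the isomorphism $H\to H^{\ast}$ associated with $B_{u}$, the Implicit Function Theorem applies and yields that $u\mapsto\psi(u)$ is as smooth as $G$, hence $C^{2}$ (indeed $C^{\infty}$).

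The formulas for the derivatives then follow by differentiating the identity $G(u,\psi(u))\equiv0$, i.e. by differentiating the PDE itself. Differentiating (\ref{eq:ei-N}) in the direction $h$ and writing $V_{u}[h]=\psi'(u)[h]$ gives $-\Delta_{g}V_{u}[h]+2q^{2}uh\,\psi+(1+q^{2}u^{2})V_{u}[h]=2quh$, which rearranges to (\ref{eq:e2}); differentiating this once more in the direction $k$ and writing $T_{u}(h,k)=\psi''(u)[h,k]$ produces the stated second-order equation. These are routine chain-rule computations once the abstract $C^{2}$ regularity is in hand, and the Dirichlet versions (\ref{eq:e2-2}) are obtained identically.

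Finally, the two-sided bound on $\psi'(u)[u]$ I would get from the maximum principle. Setting $w=V_{u}[u]$, equation (\ref{eq:e2}) with $h=u$ reads $-\Delta_{g}w+(1+q^{2}u^{2})w=2qu^{2}(1-q\psi(u))$. Since $0\le q\psi(u)\le1$ by (\ref{psipos}) the right-hand side is nonnegative, so testing against the negative part of $w$ (equivalently, invoking the maximum principle for the coercive operator) gives $w\ge0$. For the upper bound, note that $2\psi(u)$ solves $-\Delta_{g}(2\psi)+(1+q^{2}u^{2})(2\psi)=2qu^{2}$, so $2\psi(u)-w$ satisfies $-\Delta_{g}(2\psi-w)+(1+q^{2}u^{2})(2\psi-w)=2q^{2}u^{2}\psi(u)\ge0$, whence $w\le2\psi(u)\le2/q$. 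I expect the only genuinely delicate step to be the verification of the $C^{2}$ regularity of $G$, that is, the boundedness of the multilinear forms coming from the cubic term; this is exactly where the dimensional restriction $n\le3$, and the resulting embedding $H\hookrightarrow L^{6}$, are essential.
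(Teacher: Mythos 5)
Your argument is correct: the Lax--Milgram plus implicit-function-theorem setup (with the $H\hookrightarrow L^{6}$ embedding controlling the multilinear terms), the chain-rule derivations of the equations for $V_{u}[h]$ and $T_{u}(h,k)$, and the comparison argument giving $0\le\psi'(u)[u]\le2\psi(u)\le2/q$ all check out in both the Neumann and Dirichlet settings. The paper does not prove this lemma itself but defers to Druet and Hebey \cite{DH}, where essentially this same implicit-function-theorem approach is carried out, so your proof matches the intended one.
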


\begin{lem}
\label{lem:e2}The map $\Theta:H\rightarrow\mathbb{R}$ given by 
\[
\Theta(u)=\frac{1}{2}\int_{M}(1-q\psi(u))u^{2}d\mu_{g}
\]
is $C^{2}$ and 
\[
\Theta'(u)[h]=\int_{M}(1-q\psi(u))^{2}uhd\mu_{g}
\]
for any $u,h\in H$ 
\end{lem}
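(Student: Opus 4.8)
The plan is to differentiate $\Theta$ directly, using the $C^{2}$ regularity of $\psi$ granted by Lemma \ref{lem:e1}, and then to simplify the term that carries $\psi'(u)[h]$ by a self-adjointness (test-function) argument which eliminates $V_{u}[h]$ in favour of an explicit expression. I first record the regularity. Writing $\Theta(u)=\frac{1}{2}\int_{M}u^{2}d\mu_{g}-\frac{q}{2}\int_{M}\psi(u)u^{2}d\mu_{g}$, the first term is a continuous quadratic form on $H$, while the second is the composition of the $C^{2}$ map $u\mapsto\psi(u)$ (Lemma \ref{lem:e1}) with the trilinear form $(w,u,v)\mapsto\int_{M}w\,uv\,d\mu_{g}$. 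Since $n\le3$ gives the embedding $H\hookrightarrow L^{6}(M)$, Hölder's inequality shows this trilinear form is continuous, so $\Theta\in C^{2}(H,\mathbb{R})$ and differentiation under the integral sign is justified.

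Computing the Gateaux differential, and abbreviating $\psi=\psi(u)$ and $V_{u}[h]=\psi'(u)[h]$, I obtain
\[
\Theta'(u)[h]=\int_{M}(1-q\psi)uh\,d\mu_{g}-\frac{q}{2}\int_{M}u^{2}V_{u}[h]\,d\mu_{g}.
\]
The crux is to rewrite $\int_{M}u^{2}V_{u}[h]\,d\mu_{g}$. Here I would avoid integrating by parts by hand and instead read off the weak formulations, which absorb the Neumann (or Dirichlet) boundary conditions automatically. Testing the weak form of the $\psi$-equation (\ref{eq:ei-N}) (resp. (\ref{eq:ei-D})) with $V_{u}[h]$ gives
\[
\int_{M}qu^{2}V_{u}[h]\,d\mu_{g}=\int_{M}\nabla_{g}\psi\nabla_{g}V_{u}[h]+(1+q^{2}u^{2})\psi V_{u}[h]\,d\mu_{g},
\]
while testing the weak form of the $V_{u}[h]$-equation (\ref{eq:e2}) (resp. (\ref{eq:e2-2})) with $\psi$ gives
\[
\int_{M}2qu(1-q\psi)\psi h\,d\mu_{g}=\int_{M}\nabla_{g}V_{u}[h]\nabla_{g}\psi+(1+q^{2}u^{2})V_{u}[h]\psi\,d\mu_{g}.
\]
The right-hand sides coincide by symmetry of the bilinear form, whence $\int_{M}qu^{2}V_{u}[h]\,d\mu_{g}=\int_{M}2qu(1-q\psi)\psi h\,d\mu_{g}$.

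Substituting back, $-\frac{q}{2}\int_{M}u^{2}V_{u}[h]\,d\mu_{g}=-q\int_{M}u(1-q\psi)\psi h\,d\mu_{g}$, so that
\[
\Theta'(u)[h]=\int_{M}u(1-q\psi)h\,d\mu_{g}-q\int_{M}u(1-q\psi)\psi h\,d\mu_{g}=\int_{M}(1-q\psi(u))^{2}uh\,d\mu_{g},
\]
which is the claim. The same computation works verbatim in the Dirichlet case, with $(1+q^{2}u^{2})$ replaced by $q^{2}u^{2}$ in both bilinear forms; the boundary terms never surface because the identities come directly from the weak formulations. The only genuine obstacle is the simplification of $\int_{M}u^{2}V_{u}[h]\,d\mu_{g}$: one must recognize that the operators defining $\psi$ and $V_{u}[h]$ share the same self-adjoint principal part, so that $V_{u}[h]$ can be traded for $\psi$ by choosing each as a test function in the other's equation. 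Everything else is routine bookkeeping.
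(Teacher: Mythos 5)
Your proof is correct, and the duality step at its core --- testing the weak form of the $\psi$-equation (\ref{eq:ei-N}) with $V_{u}[h]$ and the weak form of (\ref{eq:e2}) with $\psi(u)$, then equating the two coincident symmetric bilinear forms to get $\int_{M}qu^{2}V_{u}[h]\,d\mu_{g}=\int_{M}2qu(1-q\psi(u))\psi(u)h\,d\mu_{g}$ --- is exactly the standard argument; the paper itself gives no proof here but defers to \cite{DH}, where the same computation is carried out. Your handling of the $C^{2}$ regularity (continuity of the trilinear form via $H\hookrightarrow L^{6}$ for $n\le 3$ composed with the $C^{2}$ map $u\mapsto\psi(u)$ from Lemma \ref{lem:e1}) and of the Dirichlet case is likewise fine, so there is nothing to add.
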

For the proofs of these results we refer to \cite{DH}, in which the
case of KGMP is treated. For KGM systems, the proof is identical.

Now, we introduce the functionals $I_{\eps},J_{\eps},G_{\eps}:H\rightarrow\mathbb{R}$
\begin{equation}
I_{\eps}(u)=J_{\eps}(u)+\frac{\omega^{2}}{2}G_{\eps}(u),\label{ieps-1}
\end{equation}
where 
\begin{equation}
J_{\eps}(u):=\frac{1}{2\eps^{n}}\int\limits _{M}\left[\eps^{2}|\nabla_{g}u|^{2}+(a-\omega^{2})u^{2}\right]d\mu_{g}-\frac{1}{p\varepsilon^{n}}\int\limits _{M}\left(u^{+}\right)^{p}d\mu_{g}\label{jieps-1}
\end{equation}
and
\begin{equation}
G_{\eps}(u):=\frac{1}{\eps^{n}}q\int_{M}\psi(u)u^{2}d\mu_{g}.\label{geps-1}
\end{equation}
By Lemma \ref{lem:e2} we deduce that 
\begin{equation}
\frac{1}{2}G_{\varepsilon}'(u)[\varphi]=\frac{1}{\varepsilon^{n}}\int_{M}[2q\psi(u)-q^{2}\psi^{2}(u)]u\varphi d\mu_{g}.\label{eq:gprimo}
\end{equation}
If $u\in H$ is a critical point of $I_{\varepsilon}$ then the pair
$(u,\psi(u))$ is the desired solution of Problem (\ref{eq:kgmps})
or (\ref{eq:kgms}).

\section{\label{sec:Nehari}The Nehari manifold}

It is well known that a critical point of the free functional $I_{\varepsilon}(u)$
can be found as a critical point constrained on the natural constraint
\[
\mathcal{N}_{\varepsilon}=\left\{ u\in H\smallsetminus\{0\}\ :\ I_{\varepsilon}^{\prime}(u)u=0\right\} .
\]
We want to prove that the Nehari manifold $\mathcal{N}_{\varepsilon}$
is a $C^{2}$ manifold when $p\ge4$. (Here is the only point in which
for Schroedinger Maxwell systems we require $p>4$).
\begin{lem}
\label{lem:nehari}It holds that 
\begin{enumerate}
\item $\mathcal{N}_{\varepsilon}$ is a $C^{2}$ manifold and $\inf_{{\mathcal{N}}_{\varepsilon}}\|u\|_{\varepsilon}>0$. 
\item It holds the Palais-Smale condition for the functional $I_{\varepsilon|\mathcal{N}_{\varepsilon}}$
on $\mathcal{N}_{\varepsilon}$ and for the functional $I_{\varepsilon|\mathcal{N}_{\varepsilon}}$
on $H$.
\item For all $u\in H$ such that $|u^{+}|_{\varepsilon,p}=1$ there exists
a unique positive number $t_{\varepsilon}=t_{\varepsilon}(u)$ such
that $t_{\varepsilon}(u)u\in\mathcal{N}_{\varepsilon}$. Moreover
$t_{\varepsilon}(u)$ depends continuously on $u$, provided that $u^{+}\not\equiv0$.
\item $\lim_{\varepsilon\rightarrow0}t_{\varepsilon}(Z_{\varepsilon,\xi})=1$\textup{
uniformly with respect to $\xi\in\partial M$}
\end{enumerate}
\end{lem}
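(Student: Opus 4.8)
The plan is to reduce all four assertions to a single sign estimate on the function $N_\varepsilon(u):=I_\varepsilon'(u)[u]$, whose zero set (minus the origin) is $\mathcal N_\varepsilon$. Writing $W(u)=\frac1{\varepsilon^n}\int_M(2q\psi(u)-q^2\psi(u)^2)u^2\,d\mu_g\ge0$, formula (\ref{eq:gprimo}) gives $N_\varepsilon(u)=\|u\|_\varepsilon^2-\frac1{\varepsilon^n}\int_M(u^+)^p\,d\mu_g+\omega^2W(u)$; since $\psi\in C^2$ (Lemma \ref{lem:e1}) and $p\ge4$, the map $N_\varepsilon$ is of class $C^2$. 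The heart of the argument is to show $N_\varepsilon'(u)[u]<0$ for every $u\in\mathcal N_\varepsilon$. This inequality yields at once the regularity of the constraint in item (1) (it says $0$ is a regular value of $N_\varepsilon$, so $\mathcal N_\varepsilon$ is a $C^2$ manifold) and the uniqueness in item (3): setting $\gamma_u(t)=I_\varepsilon(tu)$ one has $t\gamma_u'(t)=N_\varepsilon(tu)$ and $t^2\gamma_u''(t)=N_\varepsilon'(tu)[tu]$ at any critical point, so every critical point of the fibering map is a strict maximum, hence unique. Existence of $t_\varepsilon(u)$ follows because $\gamma_u'(t)>0$ for small $t$ (the coupling contributes at order $t^3$) and $\gamma_u'(t)<0$ for large $t$ (the term $-t^{p-1}\frac1{\varepsilon^n}\int(u^+)^p$ dominates, using $u^+\not\equiv0$), while continuity of $t_\varepsilon$ comes from the implicit function theorem applied to $\gamma_u'(t)=0$, $\gamma_u''\ne0$. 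The bound $\inf_{\mathcal N_\varepsilon}\|u\|_\varepsilon>0$ is immediate from $\|u\|_\varepsilon^2\le\|u\|_\varepsilon^2+\omega^2W(u)=\frac1{\varepsilon^n}\int(u^+)^p\le C_\varepsilon\|u\|_\varepsilon^p$ by the Sobolev embedding ($\varepsilon$ being fixed here).

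To prove $N_\varepsilon'(u)[u]<0$ I would differentiate, use the Nehari relation $\frac1{\varepsilon^n}\int(u^+)^p=\|u\|_\varepsilon^2+\omega^2W(u)$ to eliminate the $L^p$ term, and compute $W'(u)[u]=\frac{2q}{\varepsilon^n}\int(1-q\psi)V_u[u]u^2\,d\mu_g+2W(u)$, where $V_u[u]=\psi'(u)[u]\ge0$ solves (\ref{eq:e2}). This gives $N_\varepsilon'(u)[u]=(2-p)(\|u\|_\varepsilon^2+\omega^2W(u))+\frac{2q\omega^2}{\varepsilon^n}\int(1-q\psi)V_u[u]u^2$. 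The decisive point is to bound the last, positive, term: pairing the equation for $\psi$ against $V_u[u]$ and the equation for $V_u[u]$ against $\psi$ and subtracting (the boundary terms cancel by the Neumann conditions) yields the duality identity $\int_M V_u[u]u^2\,d\mu_g=2\int_M\psi(1-q\psi)u^2\,d\mu_g$. Since $0\le q\psi\le1$ by (\ref{psipos}) and $V_u[u]\ge0$, estimating $(1-q\psi)\le1$ and inserting this identity turns the positive term into $2\omega^2W(u)-\frac{2q^2\omega^2}{\varepsilon^n}\int\psi^2u^2$, whence
\[
N_\varepsilon'(u)[u]\le(2-p)\|u\|_\varepsilon^2+(4-p)\omega^2W(u)-\frac{2q^2\omega^2}{\varepsilon^n}\int_M\psi^2u^2\,d\mu_g.
\]
For $p\ge4$ every term on the right is non-positive and the first is strictly negative, so $N_\varepsilon'(u)[u]<0$. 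This is exactly where the threshold $p\ge4$ (and, in the Schr\"odinger--Maxwell case where the $W$-term carries the opposite sign, the strict $p>4$) is forced; I expect this computation --- in particular spotting the duality identity that makes the coupling contribution subtract rather than add --- to be the main obstacle.

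For the Palais--Smale condition (item (2)) I would first establish coercivity of PS sequences from $I_\varepsilon(u)-\frac1pI_\varepsilon'(u)[u]=(\tfrac12-\tfrac1p)\|u\|_\varepsilon^2+\frac{\omega^2}2(G_\varepsilon(u)-\frac1pG_\varepsilon'(u)[u])$, noting that for $p\ge4$ one has $G_\varepsilon(u)-\frac1pG_\varepsilon'(u)[u]=\frac q{\varepsilon^n}\int\psi(1-\tfrac4p+\tfrac{2q\psi}p)u^2\ge0$, so this quantity bounds $\|u\|_\varepsilon^2$ from above along any PS sequence. Boundedness in hand, I would extract a weak limit $u$ and prove strong convergence in the standard way: the subcritical embedding $H\hookrightarrow L^p$ is compact, so the $(u^+)^{p-1}$ term passes to the limit, and the nonlocal term is handled using $0\le\psi\le1/q$ together with the continuity of $u\mapsto\psi(u)$ from Lemma \ref{lem:e1}; testing $I_\varepsilon'(u_k)$ against $u_k-u$ then forces $\|u_k-u\|_\varepsilon\to0$. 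Finally, a PS sequence for $I_{\varepsilon}|_{\mathcal N_\varepsilon}$ is a PS sequence for the free functional because the Lagrange multiplier $\lambda_k$ satisfies $0=I_\varepsilon'(u_k)[u_k]=\lambda_kN_\varepsilon'(u_k)[u_k]+o(1)$, and $N_\varepsilon'(u_k)[u_k]\le-(p-2)\inf_{\mathcal N_\varepsilon}\|u\|_\varepsilon^2<0$ forces $\lambda_k\to0$; this reduces the constrained PS condition to the free one.

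For item (4), $t_\varepsilon(Z_{\varepsilon,\xi})$ is the unique $t>0$ with $tZ_{\varepsilon,\xi}\in\mathcal N_\varepsilon$, i.e. the root of $\|Z_{\varepsilon,\xi}\|_\varepsilon^2+\omega^2 m_\varepsilon(t)=t^{p-2}\frac1{\varepsilon^n}\int_M(Z_{\varepsilon,\xi}^+)^p\,d\mu_g$, where $m_\varepsilon(t)=\frac1{\varepsilon^n}\int(2q\psi(tZ_{\varepsilon,\xi})-q^2\psi^2(tZ_{\varepsilon,\xi}))Z_{\varepsilon,\xi}^2$. Passing to Fermi coordinates at $\xi$ and rescaling $y=\varepsilon z$, the cut-offs tend to $1$ and the metric to the Euclidean one, so $\|Z_{\varepsilon,\xi}\|_\varepsilon^2\to\int_{\mathbb R_+^n}(|\nabla V|^2+(a-\omega^2)V^2)$ and $\frac1{\varepsilon^n}\int(Z_{\varepsilon,\xi}^+)^p\to\int_{\mathbb R_+^n}V^p$, both uniformly in $\xi$ by compactness of $\partial M$. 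The rescaled equation for $\psi(tZ_{\varepsilon,\xi})$ reads $-\Delta_z\phi+\varepsilon^2(1+q^2t^2V^2)\phi=\varepsilon^2qt^2V^2$, giving $\psi=O(\varepsilon^2)$ and hence $m_\varepsilon(t)\to0$ uniformly. Testing (\ref{eq:V}) against $V$ yields the model Nehari identity $\int_{\mathbb R_+^n}(|\nabla V|^2+(a-\omega^2)V^2)=\int_{\mathbb R_+^n}V^p$, so in the limit the defining equation becomes $t^{p-2}=1$; since $p>2$ this forces $t_\varepsilon(Z_{\varepsilon,\xi})\to1$ uniformly in $\xi$.
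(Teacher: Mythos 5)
Your proposal is correct and follows essentially the same route as the paper: the key sign estimate $N_\varepsilon'(u)[u]<0$ for $p\ge4$ obtained via the duality identity $\int_M\psi'(u)[u]\,u^2\,d\mu_g=2\int_M(1-q\psi(u))\psi(u)u^2\,d\mu_g$ (which is exactly what the paper uses to pass between the two forms of $N_\varepsilon(u)$ in its proof), the Lagrange-multiplier reduction of constrained to free Palais--Smale sequences, the fibering map $t\mapsto I_\varepsilon(tu)$ for existence and uniqueness of $t_\varepsilon(u)$, and the vanishing of the $\psi$-terms in the defining equation for $t_\varepsilon(Z_{\varepsilon,\xi})$ combined with the Nehari identity for $V$. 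The only (harmless) organizational difference is that you derive the strict concavity of the fibering map at critical points directly from the already-established sign of $N_\varepsilon'(tu)[tu]$, whereas the paper recomputes $H''(t_\varepsilon)$ explicitly.
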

The proof of this lemma is postponed in the appendix.
\begin{rem}
\label{rem:nehari}We notice that, if $u\in{\mathcal{N}}_{\varepsilon}$,
then 
\begin{align*}
I_{\varepsilon}(u) & =\left(\frac{1}{2}-\frac{1}{p}\right)\|u\|_{\varepsilon}^{2}+\left(\frac{1}{2}-\frac{2}{p}\right)\frac{\omega^{2}q}{\varepsilon^{n}}\int_{M}u^{2}\psi(u)d\mu_{g}+\frac{\omega^{2}q^{2}}{\varepsilon^{n}p}\int_{M}u^{2}\psi^{2}(u)d\mu_{g}\\
 & =\left(\frac{1}{2}-\frac{1}{p}\right)|u^{+}|_{p,\varepsilon}^{p}+\frac{1}{2}\frac{\omega^{2}q^{2}}{\varepsilon^{n}}\int_{M}u^{2}\psi^{2}(u)d\mu_{g}-\frac{1}{2}\frac{\omega^{2}q}{\varepsilon^{n}}\int_{M}u^{2}\psi(u)d\mu_{g}
\end{align*}
\end{rem}
\begin{defn}
We define 
\[
m_{\varepsilon}:=\inf\left\{ I_{\varepsilon}(u)\,:\, u\in\mathcal{N}_{\varepsilon}\right\} .
\]

\end{defn}

\section{\label{sec:Strategy}Strategy of the proof of Theorem \ref{thm:main}}

We sketch the proof of our main result. First of all, since the functional
$I_{\varepsilon}\in C^{2}$ is bounded below and satisfies PS condition
on the manifold ${\mathcal{N}}_{\varepsilon}$, we have, by well known
Lusternik Schnirelmann theorem, that $I_{\varepsilon}$ has at least
$\text{cat}I_{\varepsilon}^{d}$ critical points in the sublevel 
\[
I_{\varepsilon}^{d}=\left\{ u\in{\mathcal{N}}_{\varepsilon}\ :\ I_{\varepsilon}(u)\le d\right\} .
\]
 We prove that, for $\varepsilon$ and $\delta$ small enough, it
holds 
\begin{equation}
\text{cat}\partial M\le\text{cat}\left({\mathcal{N}}_{\varepsilon}\cap I_{\varepsilon}^{m_{e}^{+}+\delta}\right)\label{eq:cat}
\end{equation}
 where $m_{e}^{+}\in\mathbb{R}$ will be defined in Section \ref{sec:phi}
(Proposition \ref{propphi})

To get (\ref{eq:cat}) we build two continuous operators 
\begin{align*}
\Phi_{\varepsilon} & :\partial M\rightarrow{\mathcal{N}}_{\varepsilon}\cap I_{\varepsilon}^{m_{e}^{+}+\delta}\\
\beta & :{\mathcal{N}}_{\varepsilon}\cap I_{\varepsilon}^{m_{e}^{+}+\delta}\rightarrow(\partial M)_{2\rho}
\end{align*}
 where $(\partial M)_{2\rho}=\left\{ x\in\mathbb{R}^{N}\ :\ d(x,\partial M)<2\rho\right\} $
with $\rho$ small enough in order to have $\text{cat}\partial M\le\text{cat}(\partial M)_{2\rho}$. 

We build these operators $\Phi_{\varepsilon}$ and $\beta$ such that
$\beta\circ\Phi_{\varepsilon}:\partial M\rightarrow(\partial M)_{2\rho}$
is homotopic to the immersion $i:\partial M\rightarrow(\partial M)_{2\rho}$.
Thus, by the properties of Lusternik Schinrelmann category we obtain
(\ref{eq:cat}). Then applying the above mentioned Lusternik Schnirelmann
theorem we obtain the proof of our main result.

\section{\label{sec:phi}The map $\Phi_{\varepsilon}$}

We define a function 
\begin{align*}
\Phi_{\varepsilon} & :\partial M\rightarrow\mathcal{N_{\varepsilon}}\\
\Phi_{\varepsilon}(q) & =t_{\varepsilon}(Z_{\varepsilon,\xi})Z_{\varepsilon,\xi}
\end{align*}

\begin{prop}
\label{propphi}For any $\varepsilon>0$ the application $\Phi_{\varepsilon}:\partial M\rightarrow\mathcal{N}_{\varepsilon}$
is continuous. Moreover, for any $\delta>0$ there exists $\varepsilon_{0}=\varepsilon_{0}(\delta)>0$
such that, if $\varepsilon<\varepsilon_{0}$ then 
\[
\Phi_{\varepsilon}(\xi)\in\mathcal{N}_{\varepsilon}\cap J_{\varepsilon}^{m_{e}^{+}+\delta}\text{ for all }\xi\in\partial M
\]
being 
\begin{align*}
m_{e}^{+}= & \inf\left\{ E^{+}(v):v\in\mathcal{N}(E^{+})\right\} \\
E^{+}(v)= & \int_{\mathbb{R}_{+}^{n}}\frac{1}{2}|\nabla v|^{2}+\frac{(a-\omega^{2})}{2}|v|^{2}-\frac{1}{p}|v^{+}|^{p}dx;\\
\mathcal{N}(E^{+})= & \left\{ v\in H^{1}(\mathbb{R}_{+}^{n})\smallsetminus\{0\}\ :\ E^{+}(v)v=0\right\} ;
\end{align*}
\end{prop}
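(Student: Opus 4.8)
The plan is to establish two facts: continuity of $\Phi_\varepsilon$, and the energy estimate $J_\varepsilon(\Phi_\varepsilon(\xi))\le m_e^+ + \delta$ for small $\varepsilon$, uniformly in $\xi$. Continuity follows almost immediately from the construction. The map $\xi\mapsto Z_{\varepsilon,\xi}$ is continuous from $\partial M$ into $H$: the Fermi coordinates depend smoothly on the base point $\xi$, and the cut-off profile $V_\varepsilon(y)\chi_R(|\bar y|)\chi_\rho(y_n)$ is transported continuously. Composing with the continuous projection $t_\varepsilon(\cdot)$ onto the Nehari manifold from Lemma \ref{lem:nehari}(3) gives continuity of $\Phi_\varepsilon$, provided $Z_{\varepsilon,\xi}^+\not\equiv 0$, which holds since $V>0$.

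The substantive part is the energy bound. First I would compute $J_\varepsilon(t_\varepsilon(Z_{\varepsilon,\xi})Z_{\varepsilon,\xi})$ and show it converges to $m_e^+$ as $\varepsilon\to 0$. The strategy is a change of variables: using the Fermi chart $\Psi_\xi^\partial$ and rescaling $y=\varepsilon z$, the integrals defining $J_\varepsilon$ over the region $I_\xi$ transform into integrals over a subset of $\mathbb R_+^n$ that, as $\varepsilon\to 0$, exhausts all of $\mathbb R_+^n$ (the cut-offs $\chi_R,\chi_\rho$ act at scale $O(1)$ in $x$, hence at scale $O(1/\varepsilon)\to\infty$ in $z$). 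Under this rescaling the factor $\varepsilon^{-n}$ in the norms exactly cancels the Jacobian $\varepsilon^n$, and the Riemannian metric $g$ in Fermi coordinates converges to the flat Euclidean metric on $\mathbb R_+^n$. Consequently
\[
\tfrac{1}{2\varepsilon^n}\int_M\big[\varepsilon^2|\nabla_g Z_{\varepsilon,\xi}|^2+(a-\omega^2)Z_{\varepsilon,\xi}^2\big]d\mu_g
\to \int_{\mathbb R_+^n}\tfrac12|\nabla V|^2+\tfrac{a-\omega^2}{2}V^2\,dx,
\]
and similarly the $L^p$ term converges to $\frac1p\int_{\mathbb R_+^n}|V^+|^p$. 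Since $V$ is precisely the least-energy solution of \eqref{eq:V}, it is the minimizer realizing $m_e^+$ on $\mathcal N(E^+)$, so the limit of the unscaled energy equals $E^+(V)=m_e^+$. Using Lemma \ref{lem:nehari}(4), namely $t_\varepsilon(Z_{\varepsilon,\xi})\to 1$ uniformly in $\xi$, I would conclude $J_\varepsilon(\Phi_\varepsilon(\xi))\to m_e^+$ uniformly in $\xi\in\partial M$. Hence for every $\delta>0$ there is $\varepsilon_0(\delta)$ giving $J_\varepsilon(\Phi_\varepsilon(\xi))<m_e^++\delta$.

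The main obstacle is making the convergence \emph{uniform in $\xi$}. The Fermi-coordinate expansion of the metric, its volume element, and the resulting error terms all depend on the curvature data at $\xi$; I would need uniform control of these over the compact boundary $\partial M$. The key tools are compactness of $\partial M$ (bounding injectivity radii, metric coefficients and their derivatives uniformly), together with the exponential decay of $V$ and $\nabla V$ at infinity, which guarantees that the contribution of the truncated tail and of the cut-off derivative terms is $o(1)$ uniformly in $\xi$. I would estimate these tails by splitting each integral into a fixed large ball in the $z$-variable plus a remainder controlled by $\int_{|z|>1/\sqrt\varepsilon}(|\nabla V|^2+V^2)$, which vanishes as $\varepsilon\to 0$ independently of $\xi$. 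Combining the uniform metric estimates with the uniform decay estimates yields the claimed uniform energy bound, completing the proof.
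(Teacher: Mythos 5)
Your treatment of the continuity and of the local part of the energy is exactly the paper's route (continuity of $t_{\varepsilon}$ from Lemma \ref{lem:nehari}, rescaling in Fermi coordinates, the limits collected in Remark \ref{remark:Zeps}, and $t_{\varepsilon}(Z_{\varepsilon,\xi})\rightarrow1$ uniformly from Lemma \ref{lem:nehari}, claim 4). However, there is a genuine gap: you estimate only $J_{\varepsilon}(\Phi_{\varepsilon}(\xi))$ and never address the nonlocal coupling term. The functional that actually governs the Nehari manifold and the sublevels used in the rest of the argument is $I_{\varepsilon}=J_{\varepsilon}+\frac{\omega^{2}}{2}G_{\varepsilon}$ with $G_{\varepsilon}(u)=\frac{q}{\varepsilon^{n}}\int_{M}\psi(u)u^{2}d\mu_{g}$, and the paper's proof computes $I_{\varepsilon}(t_{\varepsilon}Z_{\varepsilon,\xi})$, which contains the extra term
\[
\frac{q}{\varepsilon^{n}}t_{\varepsilon}^{2}\int_{M}\psi(t_{\varepsilon}Z_{\varepsilon,\xi})Z_{\varepsilon,\xi}^{2}\,d\mu_{g}.
\]
Since $\psi\ge0$, a bound on $J_{\varepsilon}$ alone does not bound $I_{\varepsilon}$, and it is the $I_{\varepsilon}$-bound that is needed, e.g.\ to deduce $\limsup_{\varepsilon\rightarrow0}m_{\varepsilon}\le m_{e}^{+}$ in Remark \ref{remlimsup} (the $J_{\varepsilon}$ in the statement is used interchangeably with $I_{\varepsilon}$ throughout the paper). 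So the one genuinely system-specific step of the proposition is missing from your proposal.

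That step is not automatic but is short: the paper proves (see (\ref{eq:lim1}) in the appendix) that the term above tends to $0$ uniformly in $\xi$, by testing the equation for $\psi_{\varepsilon}:=\psi(t_{\varepsilon}Z_{\varepsilon,\xi})$ against itself to get $\|\psi_{\varepsilon}\|_{H}^{2}\le qt_{\varepsilon}^{2}\int_{M}Z_{\varepsilon,\xi}^{2}\psi_{\varepsilon}\le c\,t_{\varepsilon}^{2}\|\psi_{\varepsilon}\|_{H}\bigl(\int_{M}Z_{\varepsilon,\xi}^{12/5}\bigr)^{5/6}$, hence $\|\psi_{\varepsilon}\|_{H}\le c\,\varepsilon^{5n/6}$ by the scaling of $Z_{\varepsilon,\xi}$, and then by H\"older again $\frac{1}{\varepsilon^{n}}\int_{M}\psi_{\varepsilon}Z_{\varepsilon,\xi}^{2}\le c\,\varepsilon^{2n/3}\rightarrow0$. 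You should add this estimate (or an equivalent one) to your argument; the remainder of your proposal, including the uniformity discussion via compactness of $\partial M$ and the exponential decay of $V$, is consistent with what the paper does.
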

\begin{proof}
The continuity follows directly by the continuity of $t_{\varepsilon}$.
For the second claim, we observe that 
\[
I_{\varepsilon}\left(t_{\varepsilon}(Z_{\varepsilon,\xi})Z_{\varepsilon,\xi}\right)=\frac{1}{2}t_{\varepsilon}^{2}\|Z_{\varepsilon,\xi}\|_{\varepsilon}^{2}-\frac{1}{p}t_{\varepsilon}^{p}|Z_{\varepsilon,\xi}|_{\varepsilon,p}^{p}+\frac{1}{\varepsilon^{n}}qt_{\varepsilon}^{2}\int_{M}\psi(t_{\varepsilon}Z_{\varepsilon,\xi})Z_{\varepsilon,\xi}d\mu_{g}
\]
In light of Lemma \ref{lem:nehari}, claim 4, we have that $t_{\varepsilon}(Z_{\varepsilon,\xi})\rightarrow1$
as $\varepsilon\rightarrow0$, uniformly with respect to $\xi\in\partial M$.
Moreover, since $t_{\varepsilon}(Z_{\varepsilon,\xi})\rightarrow1$
and by (\ref{eq:lim1}) have, uniformly with respect to $\xi$, 
\[
\frac{1}{\varepsilon^{n}}qt_{\varepsilon}^{2}\int_{M}\psi(t_{\varepsilon}Z_{\varepsilon,\xi})Z_{\varepsilon,\xi}d\mu_{g}\rightarrow0
\]
Finally, by Remark \ref{remark:Zeps}, we get 
\begin{equation}
\lim_{\varepsilon\rightarrow0}I_{\varepsilon}(t_{\varepsilon}(Z_{\varepsilon,q})Z_{\varepsilon,q})=\frac{1}{2}\int_{\mathbb{R}_{+}^{n}}|\nabla V|^{2}+(a-\omega^{2})V^{2}dy-\frac{1}{p}\int_{\mathbb{R}_{+}^{n}}V^{p}dy=m_{e}^{+}\label{jeps}
\end{equation}
 uniformly with respect to $q\in\partial M$. \end{proof}
\begin{rem}
\label{remlimsup}By Proposition \ref{propphi}, given $\delta$,
we have that $\mathcal{N}_{\varepsilon}\cap J_{\varepsilon}^{m_{e}^{+}+\delta}\neq\emptyset$
for $\varepsilon$ small enough. Moreover we have 
\[
\limsup_{\varepsilon\rightarrow0}m_{\varepsilon}\leq m_{e}^{+}.
\]

\end{rem}

\section{\label{sec:Concentration}Concentration results}

For any $\varepsilon>0$ we can construct a finite closed partition
$\mathcal{P}^{\varepsilon}=\left\{ P_{j}^{\varepsilon}\right\} _{j\in\Lambda_{\varepsilon}}$
of $M$ such that
\begin{itemize}
\item $P_{j}^{\varepsilon}$ is closed for every $j$ and $P_{j}^{\varepsilon}\cap P_{k}^{\varepsilon}\subset\partial P_{j}^{\varepsilon}\cap\partial P_{k}^{\varepsilon}$
for $j\neq k$;
\item $K_{1}\varepsilon\leq d_{j}^{\varepsilon}\leq K_{2}\varepsilon$,
where $d_{j}^{\varepsilon}$ is the diameter of $P_{j}^{\varepsilon}$
and $c_{1}\varepsilon^{n}\leq\mu_{g}\left(P_{j}^{\varepsilon}\right)\leq c_{2}\varepsilon^{n}$;
\item for any $j$ there exists an open set $I_{j}^{\varepsilon}\supset P_{j}^{\varepsilon}$
such that, if $P_{j}^{\varepsilon}\cap\partial M=\emptyset$, then
$d_{g}\left(I_{j}^{\varepsilon},\partial M\right)>K\varepsilon/2$,
while, if $P_{j}^{\varepsilon}\cap\partial M\neq\emptyset$, then
$I_{j}^{\varepsilon}\subset\left\{ x\in M\,:\, d_{g}\left(x,\partial M\right)\leq\frac{3}{2}K\varepsilon\right\} $;
\item there exists a finite number $\nu(M)\in\mathbb{N}$ such that every
$x\in M$ is contained in at most $\nu(M)$ sets $I_{j}^{\varepsilon}$,
where $\nu(M)$ does not depends on $\varepsilon$. 
\end{itemize}
By compactness of $M$ such a partition exists, at least for small
$\varepsilon$. In the following we will choose always $\varepsilon_{0}(\delta)$
sufficiently small in order to have this partition.
\begin{lem}
\label{lemmagamma}There exists a constant $\gamma>0$ such that,
for any fixed $\delta>0$ and for any $\varepsilon\in(0,\varepsilon_{0}(\delta))$,
where $\varepsilon_{0}(\delta)$ is as in Proposition \ref{propphi},
given any partition $\mathcal{P}^{\varepsilon}$of $M$ as above,
and any function $u\in\mathcal{N}_{\varepsilon}\cap J_{\varepsilon}^{m_{e}^{+}+\delta}$,
there exists a set $P_{j}^{\varepsilon}\subset\mathcal{P}^{\varepsilon}$
such that 
\[
\frac{1}{\varepsilon^{n}}\int_{P_{j}^{\varepsilon}}|u^{+}|^{p}d\mu_{g}\geq\gamma>0.
\]
 \end{lem}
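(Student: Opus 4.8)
The plan is to argue by contradiction using only the Nehari structure together with a rescaled local Sobolev inequality and the finite overlap of the enlarged cells $I_j^\varepsilon$. I would point out first that the upper energy bound $J_\varepsilon(u)\le m_e^++\delta$ plays no essential role: the argument uses only that $u\in\mathcal{N}_\varepsilon$, which is why the constant $\gamma$ comes out uniform in $\varepsilon$, $\delta$ and $u$.

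First I would record the Nehari identity. Writing $I_\varepsilon'(u)u=0$ and using (\ref{eq:gprimo}) gives
\[
\|u\|_\varepsilon^2-|u^+|_{\varepsilon,p}^p+\omega^2\frac{1}{\varepsilon^n}\int_M\big[2q\psi(u)-q^2\psi^2(u)\big]u^2\,d\mu_g=0.
\]
Since $0\le q\psi(u)\le1$ by (\ref{psipos}), the integrand equals $q\psi(u)\,(2-q\psi(u))\ge0$, whence
\[
|u^+|_{\varepsilon,p}^p\ge\|u\|_\varepsilon^2\ge C_0^2>0,
\]
the last bound being $\inf_{\mathcal{N}_\varepsilon}\|u\|_\varepsilon>0$ from Lemma \ref{lem:nehari}, claim 1.

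The core is a local inequality on each cell with a constant independent of $\varepsilon$ and of $j$. Rescaling $x=\varepsilon z$ maps $P_j^\varepsilon$ and $I_j^\varepsilon$ to sets of diameter $\sim1$ whose metric, in Fermi or normal coordinates, is uniformly close to the Euclidean one; interior cells become balls and boundary cells become half-balls. In these coordinates the weighted quantities transform as $\frac{1}{\varepsilon^n}\int_{P_j^\varepsilon}(u^+)^p=\int_{\hat P_j}\hat w^p$ and $\frac{1}{\varepsilon^n}\int_{I_j^\varepsilon}(\varepsilon^2|\nabla u|^2+u^2)=\int_{\hat I_j}(|\nabla\hat w|^2+\hat w^2)$ with $\hat w=\widehat{u^+}$. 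Writing $\hat w^p=\hat w^{\,p-2}\hat w^2$ and applying H\"older with exponents $\frac{p}{p-2}$, $\frac{2^*}{2}$ and a volume factor (this is where $p<2^*$ enters; for $n=2$ one replaces $2^*$ by any fixed $q>p$), followed by the Sobolev embedding $H^1(\hat I_j)\hookrightarrow L^{2^*}(\hat I_j)$ with uniform constant, yields
\[
\frac{1}{\varepsilon^n}\int_{P_j^\varepsilon}(u^+)^p\,d\mu_g\le C\Big(\frac{1}{\varepsilon^n}\int_{P_j^\varepsilon}(u^+)^p\,d\mu_g\Big)^{\frac{p-2}{p}}\,\frac{1}{\varepsilon^n}\int_{I_j^\varepsilon}\big(\varepsilon^2|\nabla u|^2+u^2\big)\,d\mu_g.
\]

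Finally I would sum over $j$ and contradict. Setting $B=\max_j\frac{1}{\varepsilon^n}\int_{P_j^\varepsilon}(u^+)^p$, the left-hand sides add up to $|u^+|_{\varepsilon,p}^p$ because $\{P_j^\varepsilon\}$ is a partition, while on the right the finite overlap property gives $\sum_j\int_{I_j^\varepsilon}(\varepsilon^2|\nabla u|^2+u^2)\le\nu(M)\int_M(\varepsilon^2|\nabla u|^2+u^2)\le C'\nu(M)\varepsilon^n\|u\|_\varepsilon^2$, using $a-\omega^2>0$. Hence
\[
\|u\|_\varepsilon^2\le|u^+|_{\varepsilon,p}^p\le C\,C'\nu(M)\,B^{\frac{p-2}{p}}\,\|u\|_\varepsilon^2,
\]
and dividing by $\|u\|_\varepsilon^2\ge C_0^2>0$ forces $B\ge\gamma:=\big(C\,C'\,\nu(M)\big)^{-p/(p-2)}>0$, which is exactly the claim. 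I expect the main obstacle to be the third step: securing the Sobolev constant uniformly in $\varepsilon$ and across all cells, treating the boundary half-balls and the Riemannian metric on the same footing as the interior balls. The partition's uniform diameter and volume bounds are precisely what give the rescaled domains uniformly bounded geometry, so that the embedding constant can be taken independent of $\varepsilon$ and $j$.
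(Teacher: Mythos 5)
Your argument is correct and is essentially the paper's own: the displayed chain in the proof of Lemma \ref{lemmagamma} is exactly your Nehari inequality $\|u\|_\varepsilon^2\le\frac{1}{\varepsilon^n}\int_M(u^+)^p$ followed by the splitting $|u_j^+|_p^p=|u_j^+|_p^{p-2}\,|u_j^+|_p^2$ with the maximum extracted, and the remaining step you spell out (rescaled local Sobolev embedding on cells of uniformly bounded geometry plus the finite overlap $\nu(M)$ of the sets $I_j^\varepsilon$) is precisely what the paper defers to \cite{GM10}, Lemma 5.1. Your remark that only $u\in\mathcal{N}_\varepsilon$ and $\|u\|_\varepsilon>0$ are needed, so that $\gamma$ is uniform, is likewise consistent with the paper's proof.
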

\begin{proof}
By Remark \ref{remlimsup} we have that $\mathcal{N}_{\varepsilon}\cap J_{\varepsilon}^{m_{e}^{+}+\delta}\neq\emptyset$.
For any function $u\in\mathcal{N}_{\varepsilon}\cap J_{\varepsilon}^{m_{e}^{+}+\delta}$
we denote by $u_{j}^{+}$ the restriction of $u^{+}$ to the set $P_{j}^{\varepsilon}$.
Then we can write 
\begin{align*}
\|u\|_{\varepsilon}^{2} & =\frac{1}{\varepsilon^{n}}\int_{M}(u^{+})^{p}d\mu_{g}-\frac{q\omega^{2}}{\varepsilon^{3}}\int_{M}\left(2-q\psi(u)\right)\psi(u)u^{2}d\mu_{g}\\
 & \le\frac{1}{\varepsilon^{n}}\int_{M}(u^{+})^{p}d\mu_{g}=\frac{1}{\varepsilon^{n}}\sum_{j}\int_{M}(u_{j}^{+})^{p}d\mu_{g}=\\
 & =\sum_{j}\frac{|u_{j}^{+}|_{p}^{p-2}}{\varepsilon^{\frac{n(p-2)}{p}}}\frac{|u_{j}^{+}|_{p}^{2}}{\varepsilon^{\frac{2n}{p}}}\leq\max_{j}\left\{ \frac{|u_{j}^{+}|_{p}^{p-2}}{\varepsilon^{\frac{n(p-2)}{p}}}\right\} \sum_{j}\frac{|u_{j}^{+}|_{p}^{2}}{\varepsilon^{\frac{2n}{p}}}.
\end{align*}
 Then the proof follows exactly as in \cite{GM10}, Lemma 5.1. \end{proof}
\begin{rem}
\label{ekeland}Fixed $\delta$ and $\varepsilon$, we recall that
the Ekeland variational principle states that, for any $u\in\mathcal{N}_{\varepsilon}\cap J_{\varepsilon}^{m_{\varepsilon}+2\delta}$
there exists $u_{\delta}\in\mathcal{N}_{\varepsilon}$ such that 
\[
I_{\varepsilon}(u_{\delta})<I_{\varepsilon}(u),\ \ \left\vert \left\vert u_{\delta}-u\right\vert \right\vert _{\varepsilon}<4\sqrt{\delta};
\]
 
\[
\left\vert \left({I_{\varepsilon}}_{|\mathcal{N}_{\varepsilon}}\right)'(u_{\delta})[\varphi]\right\vert <\sqrt{\delta}\left\vert \left\vert \varphi\right\vert \right\vert _{\varepsilon}.
\]
Moreover, since a Palais Smale sequence for ${I_{\varepsilon}}_{|\mathcal{N}_{\varepsilon}}$
is indeed a PS sequence for the free functional we have also that
\[
\left\vert I_{\varepsilon}'(u_{\delta})[\varphi]\right\vert <\sqrt{\delta}\left\vert \left\vert \varphi\right\vert \right\vert _{\varepsilon}.
\]
 \end{rem}
\begin{prop}
\label{propconc}For all $\eta\in(0,1)$ there exists a $\delta_{0}<m_{e}^{+}$
such that for any $\delta\in(0,\delta_{0})$ for any $\varepsilon\in(0,\varepsilon_{0}(\delta))$
(as in Prop. \ref{propphi}) and for any function $u\in\mathcal{N}_{\varepsilon}\cap I_{\varepsilon}^{m_{e}^{+}+\delta}$
we can find a point $\xi=\xi(u)\in\partial M$ for which 
\begin{equation}
\left(\frac{1}{2}-\frac{1}{p}\right)\frac{1}{\varepsilon^{n}}\int_{I_{\xi}(\rho,R)}|u^{+}|^{p}d\mu_{g}\ge(1-\eta)m_{e}^{+}\label{eq:tesi9}
\end{equation}
\end{prop}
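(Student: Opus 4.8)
The plan is to argue by contradiction through a rescaling (concentration--compactness) argument, measuring everything against the half-space ground state level $m_{e}^{+}$. Assume the statement fails: there are $\eta_{0}\in(0,1)$ and sequences $\delta_{k}\downarrow0$, $\varepsilon_{k}\downarrow0$ with $\varepsilon_{k}<\varepsilon_{0}(\delta_{k})$, together with $u_{k}\in\mathcal{N}_{\varepsilon_{k}}\cap I_{\varepsilon_{k}}^{m_{e}^{+}+\delta_{k}}$, such that for every $\xi\in\partial M$ one has $\left(\tfrac{1}{2}-\tfrac{1}{p}\right)\varepsilon_{k}^{-n}\int_{I_{\xi}(\rho,R)}|u_{k}^{+}|^{p}\,d\mu_{g}<(1-\eta_{0})m_{e}^{+}$. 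Since $G_{\varepsilon}\ge0$ we have $I_{\varepsilon_{k}}^{m_{e}^{+}+\delta_{k}}\subset J_{\varepsilon_{k}}^{m_{e}^{+}+\delta_{k}}$, so Lemma~\ref{lemmagamma} applies. My first step is an upper bound for the total mass: expanding $I_{\varepsilon_{k}}(u_{k})$ as in Remark~\ref{rem:nehari} and using that the $\omega^{2}$--coupling terms are $o(1)$ as $\varepsilon_{k}\to0$ (the estimates already invoked in Proposition~\ref{propphi}), I obtain
\[
\left(\tfrac{1}{2}-\tfrac{1}{p}\right)\frac{1}{\varepsilon_{k}^{n}}\int_{M}|u_{k}^{+}|^{p}\,d\mu_{g}=I_{\varepsilon_{k}}(u_{k})+o(1)\le m_{e}^{+}+o(1),
\]
so the globally available mass--energy never exceeds one bubble.

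Next I would locate the concentration. By Lemma~\ref{lemmagamma} each $u_{k}$ has a partition cell carrying $L^{p}$--mass $\ge\gamma$; picking a point $x_{k}$ in it and passing to a subsequence, $x_{k}\to\bar{x}\in M$. To convert the energy bound into information about a limit profile I would replace $u_{k}$ by an Ekeland perturbation (Remark~\ref{ekeland}), producing $\tilde{u}_{k}\in\mathcal{N}_{\varepsilon_{k}}$ with $\|\tilde{u}_{k}-u_{k}\|_{\varepsilon_{k}}\to0$ and $I_{\varepsilon_{k}}'(\tilde{u}_{k})\to0$, i.e.\ an approximate critical point at the same asymptotic level. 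Rescaling $\tilde{u}_{k}$ at $x_{k}$ — in geodesic normal coordinates if $\bar{x}$ is interior, in the Fermi chart $\Psi_{\bar{x}}^{\partial}$ if $\bar{x}\in\partial M$ — and using that the pulled-back metric flattens to the Euclidean one under the $\varepsilon_{k}$--dilation, the rescaled functions should converge (weakly in $H^{1}$, strongly in $L^{p}_{\mathrm{loc}}$) to a nontrivial $W$ solving the limit problem: equation~\eqref{eq:U} on $\mathbb{R}^{n}$ when the center recedes from the boundary, and the Neumann reflected problem~\eqref{eq:V} on $\mathbb{R}_{+}^{n}$ when it stays near it. This is precisely the content of the profile description of Section~\ref{sec:Profile-description}, which I would invoke.

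I would then close by an energy comparison. Being a nontrivial solution, $W$ carries at least the ground-state mass. If $W$ solves the full-space problem~\eqref{eq:U}, then by the reflection $V=U|_{y_{n}\ge0}$ its energy is $2m_{e}^{+}=2E^{+}(V)$, and weak lower semicontinuity (Fatou) would give $\left(\tfrac12-\tfrac1p\right)\varepsilon_{k}^{-n}\int_{M}|u_{k}^{+}|^{p}\ge2m_{e}^{+}+o(1)$, contradicting the upper bound of the first step; hence $W$ solves~\eqref{eq:V}, and by the Neumann symmetry $W$ is a horizontal translate of $V$, with energy exactly $m_{e}^{+}$ and maximum on $\{y_{n}=0\}$, so the concentration sits on $\partial M$. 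Writing $\xi_{k}=\bar{x}_{k}\in\partial M$ for the nearest boundary point to $x_{k}$ and noting that under the $\varepsilon_{k}$--dilation in the Fermi chart $I_{\xi_{k}}(\rho,R)$ exhausts $\mathbb{R}_{+}^{n}$, Fatou's lemma together with $\|\tilde u_k-u_k\|_{\varepsilon_k}\to0$ (transferred by the scaled Sobolev embedding) yields
\[
\left(\tfrac{1}{2}-\tfrac{1}{p}\right)\frac{1}{\varepsilon_{k}^{n}}\int_{I_{\xi_{k}}(\rho,R)}|u_{k}^{+}|^{p}\,d\mu_{g}\ge\left(\tfrac{1}{2}-\tfrac{1}{p}\right)\int_{\mathbb{R}_{+}^{n}}|V|^{p}\,dy+o(1)=m_{e}^{+}+o(1).
\]
Since $m_{e}^{+}>0$, for $k$ large the left side exceeds $(1-\eta_{0})m_{e}^{+}$, contradicting the standing hypothesis taken at $\xi=\xi_{k}$. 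This proves~\eqref{eq:tesi9}.

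The main obstacle is the passage to the limit in the second step: one must rule out loss of compactness (mass escaping to infinity or splitting into several bubbles) and certify that $W$ is a genuine nontrivial solution of \eqref{eq:V}/\eqref{eq:U} rather than a vanishing or lower-energy object. Controlling this requires the uniform-in-$\varepsilon$ local Sobolev inequalities on the cells $I_{j}^{\varepsilon}$, the convergence of the pulled-back metric to the Euclidean one, and the smallness of the $\psi$--coupling terms; the Neumann boundary condition is exactly what legitimizes the half-space reflection and pins the limiting energy at $m_{e}^{+}$ rather than $2m_{e}^{+}$, forcing the peak onto $\partial M$.
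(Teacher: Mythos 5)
Your overall strategy --- contradiction, locating a mass-$\gamma$ cell via Lemma \ref{lemmagamma}, blow-up in normal/Fermi coordinates, an interior-versus-boundary dichotomy settled by comparing the limiting energy $2m_{e}^{+}$ against the global bound $m_{e}^{+}+o(1)$, and a final Fatou-type contradiction with the assumed local bound --- is the same as the paper's. But there is one genuine gap, at the Ekeland step. Remark \ref{ekeland} produces a perturbation $u_{\delta}$ with $\|u_{\delta}-u\|_{\varepsilon}<4\sqrt{\delta}$ only for $u\in\mathcal{N}_{\varepsilon}\cap I_{\varepsilon}^{m_{\varepsilon}+2\delta}$, i.e.\ when $I_{\varepsilon}(u)$ is within $2\delta$ of the \emph{infimum} $m_{\varepsilon}$. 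Your $u_{k}$ only satisfy $I_{\varepsilon_{k}}(u_{k})\le m_{e}^{+}+\delta_{k}$, and at this stage all that is known is $\limsup_{k}m_{\varepsilon_{k}}\le m_{e}^{+}$ (Remark \ref{remlimsup}); a priori $I_{\varepsilon_{k}}(u_{k})-m_{\varepsilon_{k}}$ could stay bounded away from zero, so you cannot conclude $\|\tilde{u}_{k}-u_{k}\|_{\varepsilon_{k}}\rightarrow0$ nor that $\tilde{u}_{k}$ is an approximate critical point with a $\sqrt{\delta_{k}}$ rate. Since your closing inequality is transferred from $\tilde{u}_{k}$ back to $u_{k}$ precisely through this smallness, the argument as written does not close.

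The paper repairs exactly this point with a two-stage argument: it first proves the statement for $u\in\mathcal{N}_{\varepsilon}\cap I_{\varepsilon}^{m_{e}^{+}+\delta}\cap I_{\varepsilon}^{m_{\varepsilon}+2\delta}$, where Ekeland applies; the blow-up analysis performed there shows as a byproduct that every such sequence carries at least $m_{e}^{+}$ of energy in the limit (the $\psi$-coupling terms vanish), whence $\liminf_{k}m_{\varepsilon_{k}}\ge m_{e}^{+}$ and therefore $m_{\varepsilon}\rightarrow m_{e}^{+}$. For $\varepsilon,\delta$ small this gives $\mathcal{N}_{\varepsilon}\cap I_{\varepsilon}^{m_{e}^{+}+\delta}\subset\mathcal{N}_{\varepsilon}\cap I_{\varepsilon}^{m_{\varepsilon}+2\delta}$, removing the extra restriction. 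You should incorporate this reduction (or establish $m_{\varepsilon}\rightarrow m_{e}^{+}$ by some other means) before invoking Ekeland. Two smaller remarks: appealing to Section \ref{sec:Profile-description} for the convergence of the rescaled functions is circular, since that section concerns actual solutions obtained downstream of this proposition --- the $H^{1}$ bound on $w_{k}$, the vanishing of the rescaled $\tilde{\psi}_{k}$, and the identification of the limit equation must be carried out here directly, as the paper does; and the identification of the boundary profile as a translate of $V$ is neither needed nor justified without a uniqueness theorem --- the squeeze $\|w\|_{a}^{2}=|w|_{p}^{p}=\frac{2p}{p-2}m_{e}^{+}$ together with strong $L_{\mathrm{loc}}^{p}$ convergence already suffices for the final contradiction.
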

\begin{proof}
We first prove this property for $u\in\mathcal{N}_{\varepsilon}\cap I_{\varepsilon}^{m_{e}^{+}+\delta}\cap I_{\varepsilon}^{m_{\varepsilon}+2\delta}$. 

Assume, by contradiction, that there exists $\eta\in(0,1)$, two sequences
of vanishing real numbers $\left\{ \delta_{k}\right\} _{k}$ and $\left\{ \varepsilon_{k}\right\} _{k}$
and a sequence of functions $\left\{ u_{k}\right\} _{k}\subset\mathcal{N}_{\varepsilon_{k}}\cap I_{\varepsilon_{k}}^{m_{e}^{+}+\delta_{k}}\cap I_{\varepsilon_{k}}^{m_{\varepsilon_{k}}+2\delta_{k}}$
such that, for any $\xi\in\partial M$ it holds 
\begin{equation}
\left(\frac{1}{2}-\frac{1}{p}\right)\frac{1}{\varepsilon_{k}^{n}}\int_{I_{\xi}(\rho,R)}|u_{k}^{+}|^{p}d\mu_{g}<(1-\eta)m_{e}^{+}.\label{uk}
\end{equation}
 By Remark \ref{ekeland} we can assume 
\[
J_{\varepsilon_{k}}^{\prime}(u_{k})[\varphi]\leq\sqrt{\delta_{k}}\left\vert \left\vert \varphi\right\vert \right\vert _{\varepsilon_{k}}\text{ for all }\varphi\in H_{g}^{1}(M).
\]
 By Lemma \ref{lemmagamma} there exists a set $P_{k}^{\varepsilon_{k}}\in\mathcal{P}_{\varepsilon_{k}}$
such that 
\[
\frac{1}{\varepsilon_{k}^{n}}\int_{P_{k}^{\varepsilon_{k}}}|u_{k}^{+}|^{p}d\mu_{g}\geq\gamma>0.
\]
 we have to examine two cases: either there exists a subsequence $P_{i_{k}}^{\varepsilon_{i_{k}}}$
such that $P_{i_{k}}^{\varepsilon_{i_{k}}}\cap\partial M\neq\emptyset$,
or there exists a subsequence $P_{i_{k}}^{\varepsilon_{i_{k}}}$ such
that $P_{i_{k}}^{\varepsilon_{i_{k}}}\cap\partial M=\emptyset$. For
simplicity we write simply $P_{k}$ for $P_{i_{k}}^{\varepsilon_{i_{k}}}$.

\textbf{First case:} $\ P_{k}\cap\partial M\neq\emptyset$. We choose
a point $\xi_{k}$ interior to $P_{k}\cap\partial M$. We have the
Fermi coordinates $\Psi_{\xi_{k}}^{\partial}:B_{n-1}(0,R)\times[0,R]\rightarrow M$,
$\Psi_{\xi_{k}}^{\partial}(\bar{y},y_{n})=(\bar{x},x_{n})=x$, being
$B_{n-1}(0,R)=\left\{ \bar{y}\in\mathbb{R}^{n-1},\ |\bar{y}|<R\right\} $.
In what follows we simply call 
\[
B(R)_{+}:=B_{n-1}(0,R)\times[0,R]
\]
We consider the function $w_{k}:\mathbb{R}_{+}^{n}\rightarrow\mathbb{R}$
defined by
\[
u_{k}(\Psi_{\xi_{k}}^{\partial}(\bar{y},y_{n}))\chi_{R}(|\bar{y}|)\chi_{R}(y_{n})=u_{k}(\Psi_{\xi_{k}}^{\partial}(\varepsilon_{k}\bar{z},\varepsilon z_{n}))\chi_{R}(|\varepsilon_{k}\bar{z}|)\chi_{R}(\varepsilon z_{n})=w_{k}(\bar{z},z_{n}).
\]
 It is clear that $w_{k}\in H^{1}(\mathbb{R}_{+}^{n})$ with $w_{k}(\bar{z},z_{n})=0$
when $|\bar{z}|=0,R/\varepsilon_{k}$ or $z_{n}=R/\varepsilon_{k}$.
We now show some properties of the function $w_{k}$.

It is easy to see (cfr. \cite{GM10}, Prop. 5.3) that $\left\{ w_{k}\right\} _{k}$
is bounded in $H^{1}(\mathbb{R}_{+}^{n})$. Then there exists $w\in H^{1}(\mathbb{R}_{+}^{n})$
such that $w_{k}$ converges to $w$ weakly in $H^{1}(\mathbb{R}_{+}^{n})$
and strongly in $L_{\text{loc}}^{p}(\mathbb{R}_{+}^{n}).$

We claim that the limit function $w$ is a weak solution of 
\[
\left\{ \begin{array}{cc}
-\Delta w+(a-\omega^{2})w=(w^{+})^{p-1} & \text{in }\mathbb{R}_{+}^{n};\\
\frac{\partial w}{\partial\nu}=0 & \text{for }y=(\bar{y},0);
\end{array}\right.
\]
First, for any $f\in C_{0}^{\infty}(\mathbb{R}_{+}^{n})$ we define
on the manifold $M$ the function 
\[
f_{k}(x):=f\left(\frac{1}{\varepsilon_{k}}\left(\Psi_{\xi_{k}}^{\partial}\right)^{-1}(x)\right)=f(z)\text{ where }x=\Psi_{\xi_{k}}^{\partial}(\varepsilon_{k}z).
\]
We notice that for every $f\in C_{0}^{\infty}(\mathbb{R}^{3})$, there
exists $k$ such that $\text{supp}f\subset B(0,R/2\varepsilon_{k})$.
Thus, $\text{supp}f_{k}\subset I_{\xi_{k}}(R/2)$.

Moreover, we have $\|f_{k}\|_{\varepsilon_{k}}\le C\|f\|_{H^{1}(\mathbb{R}^{3})}$,
thus, by Ekeland principle we have 
\begin{equation}
|I'_{\varepsilon_{k}}(u_{k})[f_{k}]|\le\sigma_{k}\|f_{k}\|_{\varepsilon_{k}}\rightarrow0\text{ while }k\rightarrow\infty.\label{eq:stella}
\end{equation}
 On the other hand we have 
\begin{multline}
I'_{\varepsilon}(u_{k})[f_{k}]=\frac{1}{\varepsilon_{k}^{n}}\int_{M}\varepsilon_{k}^{2}\nabla_{g}u_{k}\nabla_{g}f_{k}+au_{k}f_{k}-(u_{k}^{+})^{p-1}f_{k}-\omega^{2}(1-q\psi(u_{k}))^{2}u_{k}f_{k}d\mu_{g}\\
=\left\langle u_{k},f_{k}\right\rangle _{\varepsilon_{k}}-\frac{1}{\varepsilon_{k}^{n}}\int_{M}(u_{k}^{+})^{p-1}f_{k}d\mu_{g}+\frac{q\omega^{2}}{\varepsilon_{k}^{3}}\int_{M}\left(2-q\psi(u_{k})\right)\psi(u_{k})u_{k}f_{k}d\mu_{g}\\
=\int_{T_{k}}\left[\sum_{ij}g_{\xi_{k}}^{ij}(\varepsilon_{k}z)\partial_{z_{i}}w_{k}(z)\partial_{z_{j}}f(z)+(a-\omega^{2})w_{k}(z)f(z)\right]|g_{\xi_{k}}(\varepsilon z)|^{1/2}dz\\
-\int_{T_{k}}(w_{k}^{+}(z))^{p-1}f(z)|g_{\xi_{k}}(\varepsilon z)|^{1/2}dz\\
+q\omega^{2}\int_{T_{k}}\left(2-q\tilde{\psi}_{k}(z)\right)\tilde{\psi}_{k}(z)w_{k}(z)f(z)|g_{\xi_{k}}(\varepsilon z)|^{1/2}dz\label{eq:Iprimouk-1}
\end{multline}
Here $T_{k}=B(R/2\varepsilon_{k})_{+}\cap\text{supp}f$ and $\psi(u_{k})(x):=\psi_{k}(x)=\psi_{k}(\Psi_{\xi_{k}}^{\partial}(\varepsilon_{k}z)):=\tilde{\psi}_{k}(z)$
where $x\in I_{\xi_{k}}(R)$ and $z\in B(R/\varepsilon_{k})_{+}$.
Since $\text{supp}f_{k}\subset I_{\xi_{k}}(R/2)$, for KGMP systems,
by (\ref{eq:kgmps}) we have 
\begin{eqnarray*}
0 & = & \int_{M}\nabla_{g}\psi(u_{k})\nabla_{g}f_{k}+(1+q^{2}u_{k})\psi(u_{k})f_{k}-qu_{k}^{2}f_{k}d\mu_{g}\\
 & = & \frac{\varepsilon_{k}^{3}}{\varepsilon_{k}^{2}}\int_{T_{k}}\sum_{ij}g_{q_{k}}^{ij}(\varepsilon_{k}z)\partial_{z_{i}}\tilde{\psi}_{k}(z)\partial_{z_{j}}f(z)|g_{q_{k}}(\varepsilon z)|^{1/2}dz\\
 &  & +\varepsilon_{k}^{3}\int_{T_{k}}(1+q^{2}w_{k}(z))\tilde{\psi}_{k}(z)f(z)|g_{q_{k}}(\varepsilon z)|^{1/2}dz\\
 &  & -\varepsilon_{k}^{3}\int_{T_{k}}qw_{k}^{2}(z)f(z)|g_{q_{k}}(\varepsilon z)|^{1/2}dz,
\end{eqnarray*}
 The above equation holds for KGMP systems but the analogous for KGM
systems is obvious. We have 
\begin{multline}
-\int_{T_{k}}\sum_{ij}g_{\xi_{k}}^{ij}(\varepsilon_{k}z)\partial_{z_{i}}\tilde{\psi}_{k}(z)\partial_{z_{j}}f(z)|g_{\xi_{k}}(\varepsilon z)|^{1/2}dz=\\
=\varepsilon_{k}^{2}\int_{T_{k}}\left((1+q^{2}w_{k}(z))\tilde{\psi}_{k}(z)-qw_{k}^{2}(z)\right)f(z)|g_{\xi_{k}}(\varepsilon z)|^{1/2}dz\label{eq:psitildek}
\end{multline}
Arguing as in Lemma \ref{lem:w-psi} we have that 
\begin{eqnarray*}
c\int_{B(R/\varepsilon_{k})_{+}}|\nabla\tilde{\psi}_{k}(z)|^{2}dz & \le & \frac{\varepsilon_{k}^{2}}{\varepsilon_{k}^{n}}\int_{M}|\nabla_{g}\psi_{k}|^{2}d\mu_{g}\le\frac{1}{\varepsilon_{k}^{n}}q\int_{M}u_{k}^{2}\psi_{k}\\
 & \le & \frac{1}{\varepsilon_{k}^{n}}\int u_{k}^{2}\le\|u_{k}\|_{\varepsilon_{k}}^{2}\le C
\end{eqnarray*}
 where $c,C>0$ are suitable constants. Moreover, by Lemma \ref{lem:w-psi}
\begin{eqnarray*}
c_{1}\int_{B(0,R/\varepsilon_{k})}|\tilde{\psi}_{k}(z)|^{2}dz & \le & \frac{1}{\varepsilon_{k}^{n}}\int_{M}\psi_{k}^{2}d\mu_{g}\le\frac{1}{\varepsilon_{k}^{n}}\|\psi_{k}\|_{H_{g}^{1}}^{2}\le c_{2}\frac{1}{\varepsilon_{k}^{n}}|u_{k}|_{4,g}^{4}\\
 & \le & c_{2}|u_{k}|_{4,\varepsilon}^{4}\le C
\end{eqnarray*}
 where $c_{1},c_{2},C>0$ are suitable constants. Conlcuding, we have
that $\|\tilde{\psi}_{k}\|_{H^{1}(B(R/\varepsilon_{k})_{+})}$ is
bounded, and then also $\|\chi_{R/\varepsilon_{k}}(z)\tilde{\psi}_{k}(z)\|_{H^{1}(\mathbb{R}_{+}^{n})}^{2}$
is bounded. So, there exists a $\bar{\psi}\in H^{1}(\mathbb{R}_{+}^{n})$
such that $\bar{\psi}_{k}(z):=\chi_{R/\varepsilon_{k}}(z)\tilde{\psi}_{k}(z)\rightarrow\bar{\psi}$
weakly in $H^{1}(\mathbb{R}_{+}^{n})$ and strongly in $L_{\text{loc}}^{p}(\mathbb{R}_{+}^{n})$
for any $2\le p<2^{*}$.

By (\ref{eq:psitildek}) we have 
\begin{multline*}
-\int_{\mathbb{R}_{+}^{n}}\sum_{ij}g_{\xi_{k}}^{ij}(\varepsilon_{k}z)\partial_{z_{i}}\bar{\psi}_{k}(z)\partial_{z_{j}}f(z)|g_{\xi_{k}}(\varepsilon z)|^{1/2}dz=\\
=\varepsilon_{k}^{2}\int_{\mathbb{R}_{+}^{n}}\left((1+q^{2}w_{k}(z))\bar{\psi}_{k}(z)-qw_{k}^{2}(z)\right)f(z)|g_{\xi_{k}}(\varepsilon z)|^{1/2}dz
\end{multline*}
 and, using that $g_{k}^{ij}(\varepsilon z)=\delta_{ij}+O(\varepsilon_{k}|z|)$
and that $|g_{q}(\varepsilon z)|^{1/2}=1+O(\varepsilon_{k}|z|)$ we
get 
\[
\int_{\mathbb{R}_{+}^{n}}\nabla\bar{\psi}_{k}(z)\nabla f(z)dz=O(\varepsilon_{k}).
\]
 Thus, the function $\bar{\psi}\in H^{1}(\mathbb{R}_{+}^{n})$ is
a weak solution of $-\Delta\bar{\psi}=0$, so $\bar{\psi}=0$.

At this point, arguing as above we have 
\begin{multline}
\frac{1}{\varepsilon_{k}^{n}}\int_{M}\left(2-q\psi(u_{k})\right)\psi(u_{k})u_{k}f_{k}d\mu_{g}=\frac{1}{\varepsilon_{k}^{n}}\int_{I_{\xi_{k}}(R/2)}\left(2-q\psi(u_{k})\right)\psi(u_{k})u_{k}f_{k}d\mu_{g}=\\
=\int_{\text{supp}f}\left(2-q\bar{\psi}_{k}\right)\bar{\psi}_{k}w_{k}f|g_{\xi_{k}}(\varepsilon z)|^{1/2}dz\rightarrow0\label{eq:conv0}
\end{multline}
 while $k\rightarrow\infty$ because $\bar{\psi}_{k}\rightarrow0$
strongly in $L_{\text{loc}}^{p}(\mathbb{R}_{+}^{n})$ for any $2\le p<2^{*}$.
Thus, by (\ref{eq:conv0}), (\ref{eq:stella}) and (\ref{eq:Iprimouk-1})
and because $w_{k}\rightharpoonup w$ in $H^{1}$ we deduce that,
for any $f\in C_{0}^{\infty}(\mathbb{R}^{3})$, it holds 
\[
\int_{\mathbb{R}_{+}^{n}}\nabla w\nabla f+(a-\omega^{2})wf-(w^{+})^{p-1}f=0.
\]
 Thus, $w$ is a weak solution of $-\Delta w+(a-\omega^{2})w=w^{p-1}$
on $\mathbb{R}_{+}^{n}$ with Neumann boundary condition. Since $u_{k}\in\mathcal{N}_{\varepsilon_{k}}\cap I_{\varepsilon_{k}}^{m_{e}^{+}+\delta_{k}}$
we have 
\[
\left(\frac{1}{2}-\frac{1}{p}\right)\|u_{k}\|_{\varepsilon_{k}}^{2}\le I_{\varepsilon_{k}}(u_{k})\le m_{e}^{+}+\delta_{k},
\]
hence 
\begin{equation}
\|w\|_{a}^{2}\le\liminf_{k}\|w_{k}\|_{a}^{2}\le\frac{2p}{p-2}m_{e}^{+}\label{eq:normaa}
\end{equation}
where $\|w\|_{a}^{2}=\frac{1}{2}\int_{\mathbb{R}_{+}^{n}}|\nabla w|^{2}+(a-\omega^{2})u^{2}$.
Set 
\[
\mathcal{N}_{\infty}=\left\{ v\in H^{1}(\mathbb{R}_{+}^{n})\smallsetminus\left\{ 0\right\} \ :\ \|v\|_{a}^{2}=|v|_{p}^{p}\right\} ,
\]
we have that $w\in\mathcal{N}_{\infty}\cup\left\{ 0\right\} $. Since
$P_{k}\cap\partial M\neq\emptyset$, we can choose $T>0$ such that
\[
P_{k}\subset I_{\xi_{k}}(\varepsilon_{k}T,\varepsilon_{k}T)\text{ for }k\text{ large enough.}
\]
for $\xi_{k}\in P_{k}\cap\partial M$ . By definition of $w_{k}$
and by Lemma \ref{lemmagamma} there exist a $\xi_{k}$ such that,
for $k$ large enough 
\begin{eqnarray}
\left\vert \left\vert w_{k}^{+}\right\vert \right\vert _{L^{p}(B_{n-1}(0,T)\times[0,T])} & = & \int_{B_{n-1}(0,T)\times[0,T]}\left\vert \chi_{R}(\varepsilon_{k}|\bar{z}|)\chi_{\rho}(\varepsilon_{k}z_{n})u_{k}^{+}\left(\psi_{q_{k}}^{\partial}(\varepsilon_{k}z)\right)\right\vert ^{p}dz=\label{eq:contogamma}\\
 & = & \frac{1}{\varepsilon_{k}^{n}}\int_{B_{n-1}(0,\varepsilon_{k}T)\times[0,\varepsilon_{k}T]}\left\vert u_{k}^{+}\left(\psi_{q_{k}}^{\partial}(y)\right)\right\vert ^{p}dy\geq\nonumber \\
 & \geq & \frac{c}{\varepsilon_{k}^{n}}\int_{B_{n-1}(0,\varepsilon_{k}T)\times[0,\varepsilon_{k}T]}\left\vert u_{k}^{+}\left(\psi_{q_{k}}^{\partial}(y)\right)\right\vert ^{p}\left\vert g_{q_{k}}(y)\right\vert ^{1/2}dy=\nonumber \\
 & \geq & \frac{c}{\varepsilon_{k}^{n}}\int_{I_{q_{k}}(\varepsilon_{k}T,\varepsilon_{k}T)}\left\vert u_{k}^{+}\right\vert ^{p}d\mu_{g}\geq c\gamma>0.\nonumber 
\end{eqnarray}
 Since $w_{k}$ converge strongly to $w$ in $L^{p}(B_{n-1}(0,T)\times[0,T])$,
we have $w\neq0$, so $w\in\mathcal{N}_{\infty}$. Hence, by (\ref{eq:normaa})
we obtain that 
\begin{equation}
\|w\|_{a}^{2}=|w|_{p}^{p}=\frac{2p}{p-2}m_{e}^{+}\label{eq:wa}
\end{equation}
and that $w_{k}\rightarrow w$ strongly in $H^{1}(\mathbb{R}_{+}^{n})$.
From this we derive the contradiction. Indeed, since $|g_{q}(\varepsilon_{k}z)|^{1/2}=1+O(\varepsilon_{k}|z|)$,
fixed $T$, by (\ref{eq:tesi9}), for $k$ large it holds
\begin{equation}
\int_{B(T)_{+}}\left(w_{k}^{+}\right)^{p}dz\le\left(1-\frac{\eta}{2}\right)\frac{2p}{p-2}m_{\infty}.\label{eq:wk9}
\end{equation}
 Moreover, by (\ref{eq:wa}) there exists a $T>0$ such that $\int_{B(T)_{+}}w^{p}dz>\left(1-\frac{\eta}{8}\right)\frac{2p}{p-2}m_{\infty}$
and, since $w_{k}\rightarrow w$ strongly in $L_{\text{loc}}^{p}(\mathbb{R}_{+}^{n})$,
$\int_{B(T)_{+}}\left(w_{k}^{+}\right)^{p}dz>\left(1-\frac{\eta}{4}\right)\frac{2p}{p-2}m_{\infty}$,
that contradicts (\ref{eq:wk9}). 

\textbf{Second case: }$P_{k}^{\varepsilon}\cap\partial M=\emptyset$.
In this case we choose a point $\xi_{k}$ interior to $P_{k}^{\varepsilon}$
and we consider the normal coordinates at $\xi_{k}$. We set $w_{k}(z)$
as 
\[
u_{k}(x)\chi_{R}(\exp_{\xi_{k}}^{-1}(x))=u_{k}(\exp_{\xi_{k}}(y))\chi_{R}(y)=u_{k}(\exp_{\xi_{k}}(\varepsilon_{k}z))\chi_{R}(\varepsilon_{k}z)=w_{k}(z).
\]
Arguing as in the previous case, we can establish that $w_{k}$ is
bounded in $H^{1}(\mathbb{R}^{n})$ and converges to some $w\in H^{1}(\mathbb{R}^{n})$
weakly in $H^{1}(\mathbb{R}^{n})$ and strongly in $L_{\text{loc}}^{p}(\mathbb{R}^{n})$.
Moreover $w\neq0$ and is a solution of $-\Delta w+(a-\omega^{2})w=w^{p-1}$
in $\mathbb{R}^{n}$. Thus $\|w\|_{a}^{2}=|w|_{p}^{p}=2\frac{2p}{p-2}m_{e}^{+}$
and $w_{k}\rightarrow w$ strongly in $H^{1}(\mathbb{R}^{n})$ and
from this follows the contradiction.

\textbf{Conclusion: }We have proved the claim for $u_{k}\in\mathcal{N}_{\varepsilon_{k}}\cap I_{\varepsilon_{k}}^{m_{e}^{+}+\delta_{k}}\cap I_{\varepsilon_{k}}^{m_{\varepsilon}+2\delta_{k}}$.
We prove now the claim in the general case. For $u_{k}$ it holds
\begin{eqnarray*}
I_{\varepsilon_{k}}(u_{k}) & = & \left(\frac{1}{2}-\frac{1}{p}\right)|u_{k}^{+}|_{p,\varepsilon_{k}}^{p}+\frac{1}{2}\frac{\omega^{2}q^{2}}{\varepsilon_{k}^{n\grave{\imath}}}\int_{M}u_{k}^{2}\psi^{2}(u_{k})d\mu_{g}-\frac{1}{2}\frac{\omega^{2}q}{\varepsilon_{k}^{n}}\int_{M}u_{k}^{2}\psi(u_{k})d\mu_{g}\\
 & \ge & (1-\eta)m_{e}^{+}-\frac{1}{2}\frac{\omega^{2}q}{\varepsilon_{k}^{3}}\int_{M}u_{k}^{2}\psi(u_{k})d\mu_{g}
\end{eqnarray*}
 By compactness of $M$ there exists $\xi_{1},\dots,\xi_{m}\in M\smallsetminus\partial M$
and $\xi_{m+1},\dots,\xi_{l}\in\partial M$ such that 
\[
\frac{1}{\varepsilon_{k}^{n}}\int_{M}u_{k}^{2}\psi(u_{k})d\mu_{g}\le\sum_{i=1}^{m}\frac{1}{\varepsilon_{k}^{n}}\int_{B_{g}(\xi_{i},r)}u_{k}^{2}\psi(u_{k})d\mu_{g}+\sum_{i=m+1}^{l}\frac{1}{\varepsilon_{k}^{n}}\int_{I_{\xi_{i}}(r)}u_{k}^{2}\psi(u_{k})d\mu_{g}
\]
 For any $\xi_{i}$, $i=1,\dots,m$, arguing as above, we can introduce
two sequences of functions $w_{k}^{i}$ and $\bar{\psi}_{k}$ such
that $w_{k}^{i}\rightarrow w^{i}$, strongly in $H^{1}(\mathbb{R}^{n})$,
$w^{i}$ solution of $-\Delta w+(a-\omega^{2})w=w^{p-1}$, and that
$\bar{\psi}_{k}^{i}\rightarrow0$ strongly in $L_{\text{loc}}^{p}(\mathbb{R}^{n})$
for any $2\le p<2^{*}$. We thus have that, for any $\xi^{i}$ 
\[
\frac{1}{\varepsilon_{k}^{n}}\int_{B_{g}(\xi^{i},r)}u_{k}^{2}\psi(u_{k})d\mu_{g}\le\int_{\mathbb{R}^{n}}\left(w_{k}^{i}\right)^{2}\bar{\psi}_{k}^{i}dx\rightarrow0.
\]
It follows identically, for $i=m+1,\dots,l$, 
\[
\frac{1}{\varepsilon_{k}^{n}}\int_{I_{\xi^{i}}(r)}u_{k}^{2}\psi(u_{k})d\mu_{g}\le\int_{\mathbb{R}_{+}^{n}}\left(w_{k}^{i}\right)^{2}\bar{\psi}_{k}^{i}dx\rightarrow0.
\]
Thus $\limsup_{k}m_{\varepsilon_{k}}\ge m_{e}^{+}$, and, in light
of Remark \ref{remlimsup}, $\lim_{k}m_{\varepsilon_{k}}=m_{e}^{+}$.
Hence, when $\varepsilon,\delta$ are small enough, we have ${\mathcal{N}}_{\varepsilon}\cap I_{\varepsilon}^{m_{e}^{+}+\delta}\subset{\mathcal{N}}_{\varepsilon}\cap I_{\varepsilon}^{m_{\varepsilon}+2\delta}$
and the general claim follows.
\end{proof}

\section{\label{sec:beta}The map $\beta$}

For any $u\in\mathcal{N}_{\varepsilon}$ with we can define its center
of mass as a point $\beta(u)\in\mathbb{R}^{N}$ by 
\begin{equation}
\beta(u)=\frac{{\displaystyle \int_{M}x|u^{+}(x)|^{p}d\mu_{g}}}{{\displaystyle \int_{M}|u^{+}(x)|^{p}d\mu_{g}}}.
\end{equation}

The application is well defined on $\mathcal{N}_{\varepsilon}$, since
$u\in\mathcal{N}_{\varepsilon}$ implies $u^{+}\neq0$ (it follows
immediatly by Lemma \ref{lem:nehari}). In the following we will show
that if $u\in\mathcal{N}_{\varepsilon}\cap J^{m_{e}^{+}+\delta}$
then $\beta(u)$ belong to a tubular neighborhood of $\partial M$,
provided $\varepsilon$ and $\delta$ sufficiently small.
\begin{prop}
\label{propbar1}For any $u\in\mathcal{N}_{\varepsilon}\cap J^{m_{e}^{+}+\delta}$,
with $\varepsilon$ and $\delta$ small enough, it holds 
\[
\beta(u)\in(\partial M)_{3\rho},
\]
being $(\partial M)_{r}=\left\{ x\in\mathbb{R}^{N}\ d(x,\partial M)<r\right\} $
a neighborhood of $\partial M$ in the space $\mathbb{R}^{N}$ where
the manifold $M$ is embedded. Moreover the composition 
\[
\beta\circ\Phi_{\varepsilon}:\partial M\rightarrow(\partial M)_{3\rho}
\]
 is well defined and homotopic to the identity of $\partial M$. \end{prop}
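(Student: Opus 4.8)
The plan is to combine the concentration estimate of Proposition \ref{propconc} with a matching \emph{upper} bound on the total mass $\frac{1}{\varepsilon^{n}}\int_{M}(u^{+})^{p}$, and then to read off the location of the barycenter from the definition of $\beta$. First I would fix a small $\eta\in(0,1)$ and apply Proposition \ref{propconc}, which for $\delta,\varepsilon$ small produces a point $\xi=\xi(u)\in\partial M$ with
\[
\frac{1}{\varepsilon^{n}}\int_{I_{\xi}(\rho,R)}|u^{+}|^{p}d\mu_{g}\ge\frac{(1-\eta)m_{e}^{+}}{\tfrac12-\tfrac1p}.
\]
To turn this localization into control of $\beta(u)$ I need a companion upper bound on the \emph{whole} mass. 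Rearranging the Nehari identity of Remark \ref{rem:nehari} (using $I_{\varepsilon}=J_{\varepsilon}+\tfrac{\omega^{2}}{2}G_{\varepsilon}$) gives, for $u\in\mathcal{N}_{\varepsilon}$,
\[
\Bigl(\tfrac12-\tfrac1p\Bigr)\frac{1}{\varepsilon^{n}}\int_{M}(u^{+})^{p}d\mu_{g}=J_{\varepsilon}(u)+\frac{\omega^{2}}{2\varepsilon^{n}}\int_{M}\bigl(2q\psi(u)-q^{2}\psi^{2}(u)\bigr)u^{2}d\mu_{g}.
\]
Since $J_{\varepsilon}(u)\le m_{e}^{+}+\delta$ on the sublevel set and the $\psi$-term is nonnegative (because $0\le q\psi\le1$), it remains to show this $\psi$-term is $o(1)$ as $\varepsilon,\delta\to0$, uniformly on $\mathcal{N}_{\varepsilon}\cap J_{\varepsilon}^{m_{e}^{+}+\delta}$.

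This uniform smallness is exactly what the concentration--compactness machinery in the proof of Proposition \ref{propconc} delivers: after rescaling around the concentration point(s), the rescaled potentials $\tilde\psi$ converge weakly to a harmonic function on a half-space (resp.\ space) that must vanish, so the rescaled integrals $\int(w_{k})^{2}\bar\psi_{k}\to0$. I expect the uniformity over the full sublevel set to be the \textbf{main obstacle}, and I would secure it by a contradiction argument running parallel to Proposition \ref{propconc}: assume a sequence $u_{k}\in\mathcal{N}_{\varepsilon_{k}}\cap J_{\varepsilon_{k}}^{m_{e}^{+}+\delta_{k}}$ violates the bound, extract the limit profile, and invoke $\bar\psi=0$ to force the $\psi$-term to $0$. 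Granting this, the total mass satisfies $\bigl(\tfrac12-\tfrac1p\bigr)\frac{1}{\varepsilon^{n}}\int_{M}(u^{+})^{p}\le m_{e}^{+}+\delta+o(1)$.

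With both bounds in hand the barycenter estimate is routine. Writing
\[
\beta(u)-\xi=\frac{\int_{M}(x-\xi)|u^{+}|^{p}d\mu_{g}}{\int_{M}|u^{+}|^{p}d\mu_{g}},
\]
I split $M=I_{\xi}(\rho,R)\cup(M\setminus I_{\xi}(\rho,R))$. On $I_{\xi}(\rho,R)$ every point has Euclidean distance at most $2\rho$ from $\xi$ once $R$ is chosen small compared with $\rho$, bounding the first contribution by $2\rho$. For the complement, the lower bound on $\int_{I_{\xi}}$ divided by the upper bound on $\int_{M}$ shows the mass fraction outside $I_{\xi}(\rho,R)$ is at most $\frac{\eta m_{e}^{+}+\delta+o(1)}{m_{e}^{+}}$, so that contribution is at most $\operatorname{diam}(M)$ times this fraction, which is made $<\rho$ by fixing $\eta,\delta$ small and then $\varepsilon$ small. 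Hence $|\beta(u)-\xi|<3\rho$ and, since $\xi\in\partial M$, $\beta(u)\in(\partial M)_{3\rho}$.

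Finally, for the homotopy: by Proposition \ref{propphi} we have $\Phi_{\varepsilon}(\xi)\in\mathcal{N}_{\varepsilon}\cap J_{\varepsilon}^{m_{e}^{+}+\delta}$, so by the first part $\beta\circ\Phi_{\varepsilon}$ is well defined into $(\partial M)_{3\rho}$. Because the factor $t_{\varepsilon}(Z_{\varepsilon,\xi})$ cancels in the quotient and $Z_{\varepsilon,\xi}\ge0$,
\[
\beta(\Phi_{\varepsilon}(\xi))=\frac{\int_{M}x\,Z_{\varepsilon,\xi}^{p}d\mu_{g}}{\int_{M}Z_{\varepsilon,\xi}^{p}d\mu_{g}}.
\]
The change of variables $x=\Psi_{\xi}^{\partial}(\varepsilon z)$ in the definition (\ref{zeq}) of $Z_{\varepsilon,\xi}$, together with the concentration of $V_{\varepsilon}$ at scale $\varepsilon$, yields $\beta(\Phi_{\varepsilon}(\xi))\to\xi$ uniformly in $\xi\in\partial M$ as $\varepsilon\to0$. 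Thus for $\varepsilon$ small $|\beta(\Phi_{\varepsilon}(\xi))-\xi|<\rho$ for all $\xi$, and the straight-line homotopy $H(t,\xi)=(1-t)\xi+t\,\beta(\Phi_{\varepsilon}(\xi))$ keeps every point within distance $\rho<3\rho$ of $\partial M$; it therefore stays in $(\partial M)_{3\rho}$ and connects the inclusion $i:\partial M\to(\partial M)_{3\rho}$ to $\beta\circ\Phi_{\varepsilon}$, proving the claim. The only delicate bookkeeping is the consistent ordering of the small parameters $\eta,\delta,\varepsilon$ and $R,\rho$; everything else is elementary once the mass is localized.
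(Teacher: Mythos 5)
Your overall architecture coincides with the paper's: the lower bound from Proposition \ref{propconc}, an upper bound on the total $L^{p}$ mass extracted from the Nehari identity, the splitting of the barycenter integral over $I_{\xi}(\rho,R)$ and its complement, and the homotopy via $\beta(\Phi_{\varepsilon}(\xi))\rightarrow\xi$. The one place you genuinely diverge is the step you flag as the ``main obstacle'': showing that the interaction term $\frac{\omega^{2}}{2\varepsilon^{n}}\int_{M}\left(2q\psi(u)-q^{2}\psi^{2}(u)\right)u^{2}d\mu_{g}$ is uniformly small on the sublevel set. You propose a contradiction/concentration argument rerunning the machinery of Proposition \ref{propconc} (rescaled potentials converging weakly to a harmonic limit $\bar{\psi}=0$). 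That route is viable --- the paper itself carries out exactly such an argument in the Conclusion step of Proposition \ref{propconc} to show $\frac{1}{\varepsilon_{k}^{n}}\int_{M}u_{k}^{2}\psi(u_{k})d\mu_{g}\rightarrow0$ --- but here it is heavier than needed. The paper instead gets the bound directly and quantitatively: testing the $\psi$-equation against $\psi(u)$ and applying H\"older gives $\|\psi(u)\|_{H}\le C\left(\int_{M}u^{12/5}\right)^{5/6}$, whence
\[
\frac{1}{\varepsilon^{n}}\int_{M}\psi(u)u^{2}d\mu_{g}\le C\varepsilon^{\frac{2}{3}n}\|u\|_{\varepsilon}^{4}\le C\varepsilon^{\frac{2}{3}n},
\]
using only that $\|u\|_{\varepsilon}$ is bounded on $\mathcal{N}_{\varepsilon}\cap I_{\varepsilon}^{m_{e}^{+}+\delta}$. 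This is uniform for free, needs no subsequence extraction, and even gives a rate; so there is no obstacle at the point where you anticipate one. The remainder of your argument --- the mass-fraction computation giving the bound $2\rho+D\bigl(1-\frac{1-\eta}{1+2\delta/m_{e}^{+}}\bigr)$ for $|\beta(u)-\xi|$, and the straight-line homotopy justified by $\beta(\Phi_{\varepsilon}(\xi))\rightarrow\xi$ uniformly --- matches the paper's, which dismisses the homotopy claim as standard while you spell it out correctly.
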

\begin{proof}
Since $m_{\varepsilon}\rightarrow m_{e}^{+}$ and by Proposition \ref{propconc}
we get that for any $u\in\mathcal{N}_{\varepsilon}\cap J^{m_{e}^{+}+\delta}$
there exists $\xi\in\partial M$ such that 
\begin{equation}
(1-\eta)m_{e}^{+}\leq\left(\frac{1}{2}-\frac{1}{p}\right)\frac{1}{\varepsilon^{n}}|u^{+}|_{L^{p}\left(I_{\xi}(\rho,R)\right)}^{p}.\label{bar1}
\end{equation}
 Since $u\in\mathcal{N}_{\varepsilon}\cap J^{m_{e}^{+}+\delta}$ we
have
\begin{align*}
m_{e}^{+}+\delta & \ge I_{\varepsilon}(u)=\left(\frac{p-2}{2p}\right)|u^{+}|_{p,\varepsilon}^{p}+\frac{\omega^{2}q^{2}}{2\varepsilon^{n}}\int_{M}u^{2}\psi^{2}(u)d\mu_{g}-\frac{\omega^{2}q}{2\varepsilon^{n}}\int_{M}u^{2}\psi(u)d\mu_{g}\ge\\
 & \ge\left(\frac{p-2}{2p}\right)|u^{+}|_{p,\varepsilon}^{p}-\frac{\omega^{2}q}{2\varepsilon^{n}}\int_{M}u^{2}\psi(u)d\mu_{g}
\end{align*}
Now, arguing as in Lemma \ref{lem:w-psi} we have that, by Holder
inequality that $\|\psi(u)\|_{H}\le\left(\int_{M}u^{12/5}\right)^{5/6}$,
and, in the same way, that 
\begin{eqnarray*}
\frac{1}{\varepsilon^{n}}\int_{M}\psi(u)u^{2} & \le & \frac{1}{\varepsilon^{n}}\|\psi\|_{H}\left(\int_{M}u^{12/5}\right)^{5/6}\le C\frac{1}{\varepsilon^{n}}\left(\int_{M}u^{12/5}\right)^{5/3}\\
 & \le & C\varepsilon^{\frac{2}{3}n}|u|_{12/5,\varepsilon}^{4}\le C\varepsilon^{\frac{2}{3}n}\|u\|_{\varepsilon}^{4}\le C\varepsilon^{\frac{2}{3}n},
\end{eqnarray*}
since $\|u\|_{\varepsilon}$ is bounded because $u\in{\mathcal{N}}_{\varepsilon}\cap I_{\varepsilon}^{m_{\infty}+\delta}$.

So, provided we choose $\varepsilon(\delta_{0})$ small enough, we
have 
\begin{equation}
\left(\frac{1}{2}-\frac{1}{p}\right)\frac{1}{\varepsilon^{n}}|u^{+}|_{p,g}^{p}<m_{e}^{+}+2\delta.\label{bar2}
\end{equation}
 By (\ref{bar1}) and (\ref{bar2}) we get 
\[
\int_{I_{\xi}(\rho,R)}\frac{|u^{+}|^{p}}{|u^{+}|_{p,g}^{p}}d\mu_{g}\geq\frac{1-\eta}{1+\frac{2\delta}{m_{e}^{+}}}.
\]
 By definition of $\beta$ we have 
\begin{eqnarray*}
|\beta(u)-q| & \leq & \left\vert \int_{I_{\xi}(\rho,R)}(x-q)\frac{|u^{+}|^{p}}{|u^{+}|_{p,g}^{p}}d\mu_{g}\right\vert +\left\vert \int_{M\smallsetminus I_{\xi}(\rho,R)}(x-q)\frac{|u^{+}|^{p}}{|u^{+}|_{p,g}^{p}}d\mu_{g}\right\vert \leq\\
 & \leq & 2\rho+D\left(1-\frac{1-\eta}{1+\frac{\delta}{m_{e}^{+}}}\right),
\end{eqnarray*}
where $D$ is the diameter of the manifold $M$ as a subset of $\mathbb{R}^{n}$.
Here we supposed, without loss of generality that $R<\rho$. Choosing
$\eta$ and $\delta$ small enough we get the first claim. The second
claim is standard.
\end{proof}

\section{\label{sec:Profile-description}Profile description}

Let $u_{\varepsilon}$ a low energy solution. By regularity theory
(see \cite[Th. 1]{Che84})we can prove that $u_{\varepsilon}\in C^{\infty}(\bar{M})$.
So there exists at least one maximum point of $u_{\varepsilon}$ on
$M$. We can prove that, for $\varepsilon$ small, $u_{\varepsilon}$
has a unique local maximum point $P_{\varepsilon}\in\partial M$ and
we can describe the profile of $u_{\varepsilon}$.
\begin{lem}
Let $(u_{\varepsilon},\psi(u_{\varepsilon}))$ be solution of (\ref{eq:kgms})
such that $I_{\varepsilon}(u_{\varepsilon})\le m_{e}^{+}+\delta<2m_{e}^{+}$.
Then, for $\varepsilon$ small, $u_{\varepsilon}$ is not constant
on $M$.\end{lem}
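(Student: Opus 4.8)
The plan is to argue by contradiction, exploiting the energy gap $m_e^+ + \delta < 2m_e^+$ together with the structure of constant solutions of the system.

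Suppose $u_\varepsilon \equiv c$ is constant on $M$. First I would determine the possible values of $c$. If $u_\varepsilon$ is constant, then so is $\psi(u_\varepsilon)$ (by uniqueness of the solution of \eqref{eq:ei-N} or \eqref{eq:ei-D} with constant right-hand side), and the first equation of \eqref{eq:kgms} reduces to an algebraic relation $ac = |c|^{p-2}c + \omega^2(q\psi - 1)^2 c$, forcing $|c|$ to be bounded below away from zero unless $c = 0$. The key point is that a nonzero constant solution, being spread uniformly over all of $M$, must carry a definite amount of energy: I would compute $I_\varepsilon(c)$ and show that, because the $L^p$-mass is distributed over the whole manifold rather than concentrated in a single Fermi chart, its energy exceeds the one-peak threshold.

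\emph{The cleanest route} is to invoke the concentration result, Proposition~\ref{propconc}. Since $I_\varepsilon(u_\varepsilon) \le m_e^+ + \delta$, that proposition produces a single point $\xi \in \partial M$ capturing essentially all the $L^p$-energy:
\[
\left(\tfrac{1}{2}-\tfrac{1}{p}\right)\frac{1}{\varepsilon^n}\int_{I_\xi(\rho,R)} |u_\varepsilon^+|^p \, d\mu_g \ge (1-\eta) m_e^+.
\]
If $u_\varepsilon \equiv c \ne 0$ were constant, the left-hand side scales like $\varepsilon^{-n}\mu_g(I_\xi(\rho,R)) \sim \varepsilon^{-n}\cdot O(\varepsilon^{n-1})\cdot R^{n}$ (a local chart of fixed radius), which stays bounded, while the total energy $I_\varepsilon(c) \approx \left(\tfrac12 - \tfrac1p\right)\varepsilon^{-n}|c|^p \mu_g(M)$ blows up like $\varepsilon^{-n}$. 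Thus a nonzero constant has energy tending to $+\infty$, contradicting $I_\varepsilon(u_\varepsilon) \le m_e^+ + \delta$, which is uniformly bounded. This immediately rules out nonzero constants for $\varepsilon$ small, and $c=0$ is excluded because $0 \notin \mathcal{N}_\varepsilon$ (by Lemma~\ref{lem:nehari}, $\inf_{\mathcal{N}_\varepsilon}\|u\|_\varepsilon > 0$).

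\emph{The main obstacle} I anticipate is handling the algebraic characterization of the admissible constants $c$ carefully: one must check that the constant solution, if it exists at all, is genuinely a critical point on $\mathcal{N}_\varepsilon$ (so that the energy bound applies) and that $\psi(c)$ is itself the correct constant, so that the reduction of the first equation is legitimate. Once the energy-scaling dichotomy is set up — bounded energy for a concentrated one-peak profile versus $\varepsilon^{-n}$-growth for a uniformly distributed constant — the contradiction is forced by the explicit factor of $\varepsilon^{-n}$ in the functional, and the hypothesis $m_e^+ + \delta < 2m_e^+$ guarantees we are safely in the single-peak regime where Proposition~\ref{propconc} applies. The conclusion is that $u_\varepsilon$ cannot be constant, as claimed.
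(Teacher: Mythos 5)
Your proposal is correct and its core argument is exactly the paper's: a nonzero constant solution has $|c|^{p-2}\ge a-\omega^{2}$ from the algebraic relation, so $I_{\varepsilon}(c)\sim\varepsilon^{-n}$ blows up, contradicting the uniform bound $I_{\varepsilon}(u_{\varepsilon})\le m_{e}^{+}+\delta$. The detour through Proposition~\ref{propconc} is superfluous (the energy blow-up alone gives the contradiction), and your explicit exclusion of $c=0$ via $\inf_{\mathcal{N}_{\varepsilon}}\|u\|_{\varepsilon}>0$ is a detail the paper leaves implicit.
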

\begin{proof}
At first we notice that if $u_{\varepsilon}$ is constant, also $\psi(u_{\varepsilon})$
is constant. Moreover, by (\ref{eq:kgms}) the values of $u_{\varepsilon}$
and $\psi(u_{\varepsilon})$ depend only on $a,\omega,q$ and $p$.
Let $u_{\varepsilon}=u_{0}$ and $\psi(u_{\varepsilon})=\psi_{0}$.
Immediatly we have
\begin{multline*}
I_{\varepsilon}(u_{\varepsilon})=\left(\frac{1}{2}-\frac{1}{p}\right)\frac{1}{\varepsilon^{3}}\int_{M}(a-\omega^{2})u_{0}^{2}d\mu_{g}\\
+\left(\frac{1}{2}-\frac{2}{p}\right)\frac{\omega^{2}q}{\varepsilon^{3}}\int_{M}u_{0}^{2}\psi_{0}d\mu_{g}+\frac{\omega^{2}q^{2}}{\varepsilon^{3}p}\int_{M}u_{0}^{2}\psi_{0}^{2}d\mu_{g}\rightarrow+\infty
\end{multline*}
which leads us to a contradiction.
\end{proof}
Since $u_{\varepsilon}$ is not constant and continuous on $\bar{M},$
then there exists at least a maximum point $P\in\bar{M}$. Proceeding
as in \cite{GMkg}, it is easy to see that if $P\in M\smallsetminus\partial M$
then $I_{\varepsilon}(u_{\varepsilon})\ge m_{\infty}=2m_{\varepsilon}^{+}$
where 
\begin{align*}
m_{\infty}= & \inf\left\{ E(v):v\in\mathcal{N}(E)\right\} =E(U)\text{ with }U\text{ defined in }(\ref{eq:U})\\
E(v)= & \int_{\mathbb{R}^{n}}\frac{1}{2}|\nabla v|^{2}+\frac{(a-\omega^{2})}{2}|v|^{2}-\frac{1}{p}|v^{+}|^{p}dx;\\
\mathcal{N}(E)= & \left\{ v\in H^{1}(\mathbb{R}^{n})\smallsetminus\{0\}\ :\ E(v)v=0\right\} .
\end{align*}
This implies that $P\in\partial M$. Now, since $u_{\varepsilon}$
is regular and $\frac{\partial u}{\partial\nu}=0$ on $\partial M$,
$P$ is also a critical point for $\left.u_{\varepsilon}\right|_{\partial M}$
and $\Delta_{g}u_{\varepsilon}(x_{0})\le0$. We have the following
result. 
\begin{lem}
Let $P\in\partial M$ be a maximum point for $u_{\varepsilon}$ solution
of (\ref{eq:kgms}). Then 
\begin{equation}
\left(u_{\varepsilon}(P)\right)^{p-2}>a-\omega^{2}\label{eq:maxval}
\end{equation}
\end{lem}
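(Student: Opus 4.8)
The plan is to read off the estimate by evaluating the first equation of (\ref{eq:kgms}) at the maximum point $P$, obtaining the non-strict bound at once, and then to upgrade it to a strict inequality by a maximum-principle dichotomy.

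First I would evaluate the equation $-\varepsilon^{2}\Delta_{g}u_{\varepsilon}+au_{\varepsilon}=u_{\varepsilon}^{p-1}+\omega^{2}(q\psi(u_{\varepsilon})-1)^{2}u_{\varepsilon}$ at $x=P$. Since $P\in\partial M$ is a global maximum of the positive solution $u_{\varepsilon}$, we have $u_{\varepsilon}(P)>0$ and, as recalled just above the statement, $\Delta_{g}u_{\varepsilon}(P)\le0$. Dividing by $u_{\varepsilon}(P)>0$ gives
\[
u_{\varepsilon}(P)^{p-2}=a-\omega^{2}(q\psi(u_{\varepsilon})(P)-1)^{2}-\varepsilon^{2}\frac{\Delta_{g}u_{\varepsilon}(P)}{u_{\varepsilon}(P)}.
\]
The last term is nonnegative, and by (\ref{psipos}) we have $0\le q\psi(u_{\varepsilon})\le1$, so $(q\psi(u_{\varepsilon})(P)-1)^{2}\le1$ and hence $\omega^{2}(q\psi(u_{\varepsilon})(P)-1)^{2}\le\omega^{2}$. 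This already yields the non-strict bound $u_{\varepsilon}(P)^{p-2}\ge a-\omega^{2}$, which also rules out the possibility $u_{\varepsilon}(P)^{p-2}<a-\omega^{2}$.

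The only delicate step is to exclude the equality case, and here I would argue by contradiction. Suppose $u_{\varepsilon}(P)^{p-2}=a-\omega^{2}$ and set $h(x)=u_{\varepsilon}(x)^{p-2}+\omega^{2}(q\psi(u_{\varepsilon})(x)-1)^{2}-a$. Since $t\mapsto t^{p-2}$ is increasing (recall $p\ge4$) and $u_{\varepsilon}(x)\le u_{\varepsilon}(P)$, we get $u_{\varepsilon}(x)^{p-2}\le a-\omega^{2}$; combined with $\omega^{2}(q\psi(u_{\varepsilon})-1)^{2}\le\omega^{2}$ from (\ref{psipos}), this forces $h\le0$ on all of $M$. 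Rewriting the equation as $-\varepsilon^{2}\Delta_{g}u_{\varepsilon}=u_{\varepsilon}\,h$ and using $u_{\varepsilon}\ge0$, we conclude $\Delta_{g}u_{\varepsilon}\ge0$, i.e. $u_{\varepsilon}$ is subharmonic. But $u_{\varepsilon}$ attains its global maximum at the boundary point $P$, where the Neumann condition gives $\partial u_{\varepsilon}/\partial\nu(P)=0$; by the Hopf boundary-point lemma (the interior sphere condition holds because $\partial M$ is smooth) a non-constant subharmonic function must have strictly positive outer normal derivative at such a maximum. This contradicts $\partial u_{\varepsilon}/\partial\nu(P)=0$ unless $u_{\varepsilon}$ is constant, which is excluded by the previous lemma. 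Hence the equality case cannot occur and $u_{\varepsilon}(P)^{p-2}>a-\omega^{2}$, which is (\ref{eq:maxval}).

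I expect the only real obstacle to be the verification of the hypotheses behind the Hopf argument: one needs $u_{\varepsilon}\ge0$, which follows by testing the equation with $u_{\varepsilon}^{-}$ and using that $a-\omega^{2}(q\psi-1)^{2}\ge a-\omega^{2}>0$, and one needs $u_{\varepsilon}$ to be genuinely non-constant, already established. The non-strict estimate is immediate from the pointwise evaluation; everything subtle is concentrated in the subharmonicity/Hopf dichotomy, and I note that the same reasoning applies verbatim in the Neumann (KGMP) case since it never uses the boundary value of $\psi$.
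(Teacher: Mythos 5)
Your proof is correct, and its first half coincides with the paper's entire argument: evaluate the first equation of (\ref{eq:kgms}) at $P$, use $\Delta_{g}u_{\varepsilon}(P)\le0$ and $\omega^{2}(q\psi(u_{\varepsilon})(P)-1)^{2}\le\omega^{2}$ to get $u_{\varepsilon}(P)^{p-2}\ge a-\omega^{2}$. Where you diverge is in upgrading to a strict inequality. The paper does this pointwise, asserting $|q\psi(u_{\varepsilon})(P)-1|<1$, i.e. $\psi(u_{\varepsilon})(P)>0$ (which follows from the strong maximum principle for the second equation, since $u_{\varepsilon}\not\equiv0$), so that the middle term $\omega^{2}(q\psi(u_{\varepsilon})(P)-1)^{2}$ is strictly below $\omega^{2}$ and the chain of inequalities closes strictly in one line. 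You instead rule out equality globally: equality at $P$ forces $u_{\varepsilon}$ to be subharmonic, and the Hopf boundary point lemma then contradicts the Neumann condition at $P$ for a non-constant function. Your route is heavier (it needs $u_{\varepsilon}\ge0$, the strong maximum principle, Hopf, and the non-constancy supplied by the preceding lemma, which is not literally a hypothesis of this statement), but it is also more robust: it never uses $\psi(u_{\varepsilon})(P)>0$, so it survives in the Dirichlet case of (\ref{eq:kgms}), where $\psi(u_{\varepsilon})$ vanishes on $\partial M$ and the paper's strictness argument degenerates to $(q\psi-1)^{2}=1$, and it also covers $\omega=0$, where the paper's middle term contributes nothing. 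The paper's argument buys brevity; yours buys uniformity across the boundary conditions.
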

\begin{proof}
We have just pointed out that $\Delta_{g}u_{\varepsilon}(P)\le0$.
Then 
\[
0\ge\varepsilon^{2}\Delta_{g}u_{\varepsilon}(P)=u_{\varepsilon}(P)\left[a-\left(u_{\varepsilon}(P)\right)^{p-2}-\omega^{2}\left(q\psi(u_{\varepsilon})(P)-1\right)^{2}\right]
\]
and, since $|q\psi(u_{\varepsilon})-1|<1$,
\[
a\le\left(u_{\varepsilon}(P)\right)^{p-2}+\omega^{2}\left(q\psi(u_{\varepsilon})(P)-1\right)^{2}\le\left(u_{\varepsilon}(P)\right)^{p-2}+\omega^{2}.
\]
This ends the proof.
\end{proof}
\begin{lem}
\label{lem:duemax}Let $u_{\varepsilon}$ be a solution of (\ref{eq:kgms})
such that $I_{\varepsilon}(u_{\varepsilon})\le m_{e}^{+}+\delta<2m_{e}^{+}$.
Then, when $\varepsilon$ is sufficiently small, $u_{\varepsilon}$
has a unique maximum point $P\in\partial M$.\end{lem}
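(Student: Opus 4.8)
The plan is to argue by contradiction: I would suppose that along some sequence $\varepsilon_k\to 0$ the solution $u_k:=u_{\varepsilon_k}$ has two distinct local maximum points $P_k^1\neq P_k^2$, and then show this forces $I_{\varepsilon_k}(u_k)\ge 2m_e^+ +o(1)$, in contradiction with the standing hypothesis $I_{\varepsilon_k}(u_k)\le m_e^++\delta<2m_e^+$. The whole scheme is a two-bubble energy argument: a single boundary concentration costs $m_e^+$, so two genuinely separated peaks cost $2m_e^+$, which the low-energy assumption forbids.

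First I would record the structure of each maximum point. At any local maximum $P$ one has $\Delta_g u_k(P)\le 0$, so the computation of the previous lemma yields $(u_k(P))^{p-2}>a-\omega^2$, i.e. a uniform lower bound $u_k(P_k^i)\ge c_0>0$; this guarantees that the rescaled profiles below are nontrivial. Moreover the interior-concentration estimate recalled in the Profile description section (an interior peak produces energy $\ge m_\infty=2m_e^+$) already contradicts the energy bound, so I may assume both $P_k^i$ lie on, or within $O(\varepsilon_k)$ of, $\partial M$.

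Next comes the separation step, which I expect to be the main obstacle. Set $L_k=d_g(P_k^1,P_k^2)/\varepsilon_k$; I claim $L_k\to\infty$. If not, I would rescale around $P_k^1$ in Fermi coordinates, $w_k(z)=u_k(\Psi^{\partial}_{P_k^1}(\varepsilon_k z))$ (with the cut-offs as in Proposition \ref{propconc}). The concentration analysis of Proposition \ref{propconc} gives, up to a subsequence, $w_k\to V$ strongly in $H^1$, and by uniform Schauder estimates for the rescaled equation also in $C^2_{\mathrm{loc}}$, where $V$ solves (\ref{eq:V}); its even reflection is the radial ground state $U$ of (\ref{eq:U}), which is strictly radially decreasing, so $V$ has the origin as its unique and nondegenerate critical point. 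The images $z_k^2$ of $P_k^2$ stay bounded and are critical points of $w_k$, so the $C^1_{\mathrm{loc}}$ limit forces $z_k^2\to 0$; then the nondegeneracy of the maximum of $V$ together with $C^2_{\mathrm{loc}}$ convergence shows $w_k$ has a single critical point near $0$ for large $k$, contradicting the presence of two distinct maxima $0$ and $z_k^2$. Hence $L_k\to\infty$. The genuinely delicate part here is upgrading the routine $H^1$ concentration to $C^2_{\mathrm{loc}}$ convergence and invoking the qualitative properties (strict monotonicity, nondegeneracy) of $U$ to preclude two collapsing peaks.

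Finally I would close with the energy count. Using Remark \ref{rem:nehari} together with the estimate $\frac{1}{\varepsilon_k^n}\int_M \psi(u_k)u_k^2\le C\varepsilon_k^{2n/3}\to 0$ obtained exactly as in Proposition \ref{propbar1}, one gets $I_{\varepsilon_k}(u_k)\ge\left(\frac12-\frac1p\right)\frac{1}{\varepsilon_k^n}|u_k^+|_p^p-o(1)$. Choosing radii $r_k$ with $\varepsilon_k\ll r_k\ll d_g(P_k^1,P_k^2)$ makes the neighborhoods $I_{P_k^1}(r_k)$ and $I_{P_k^2}(r_k)$ disjoint, and by strong $H^1$ convergence of the rescaled profiles together with (\ref{eq:wa}) each contributes $\left(\frac12-\frac1p\right)\frac{1}{\varepsilon_k^n}\int_{I_{P_k^i}(r_k)}|u_k^+|^p\ge m_e^+-o(1)$. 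Summing the two disjoint contributions yields $I_{\varepsilon_k}(u_k)\ge 2m_e^+-o(1)>m_e^++\delta$ for $k$ large, the desired contradiction. Therefore, for $\varepsilon$ small, $u_\varepsilon$ has a unique local (hence global) maximum point, and by the discussion above it lies on $\partial M$.
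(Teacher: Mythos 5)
Your proposal is correct and follows essentially the same strategy as the paper: argue by contradiction, split according to whether $d_{g}(P^{1}_{\varepsilon},P^{2}_{\varepsilon})/\varepsilon$ stays bounded (then rescale in Fermi coordinates, use $C^{2}_{\mathrm{loc}}$ convergence of the reflected profiles to the radial ground state $U$ and the nondegeneracy of its unique critical point to exclude two collapsing maxima) or tends to infinity (then a two-bubble energy count over disjoint neighborhoods forces $I_{\varepsilon}(u_{\varepsilon})\ge 2m_{e}^{+}-o(1)$, contradicting $I_{\varepsilon}(u_{\varepsilon})\le m_{e}^{+}+\delta<2m_{e}^{+}$). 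The only difference is organizational: the paper first shows $d_{g}(P^{1},P^{2})\to0$, then $d_{g}/\varepsilon\to0$, whereas you merge these into a single dichotomy on $L_{k}=d_{g}/\varepsilon_{k}$ with intermediate radii $r_{k}$, which is an equivalent (and slightly cleaner) bookkeeping of the same argument.
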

\begin{proof}
We argue by contradiction. Suppose that $u_{\varepsilon}$ has two
maximum points $P_{\varepsilon}^{1},P_{\varepsilon}^{2}\in\partial M$.
We first prove that $d_{g}(P_{\varepsilon}^{1},P_{\varepsilon}^{2})\rightarrow0$. 

Otherwise, we can find a sequence of vanishing positive numbers $\varepsilon_{j}$
and for each $\varepsilon_{j}$ a solution $u_{\varepsilon_{j}}$
with (at least) two maximum points $P_{\varepsilon_{j}}^{1}\rightarrow P^{1}$
and $P_{\varepsilon_{j}}^{2}\rightarrow P^{2}$ as $j\rightarrow\infty$
with $P^{1}\neq P^{2}$.

We define $Q_{\varepsilon_{j}}^{i}\in\mathbb{R}^{n-1}$ such that
\[
P_{\varepsilon_{j}}^{i}=\exp_{P^{i}}^{\partial}(Q_{\varepsilon_{j}}^{i})\ \ i=1,2.
\]
and we can define a sequence $v_{j}^{1}$ as 
\[
v_{j}^{1}(z)=\left\{ \begin{array}{cc}
u_{\varepsilon_{j}}\left(\psi_{P^{1}}^{\partial}(Q_{\varepsilon_{j}}^{1}+\varepsilon_{j}z)\right) & \text{ for }z_{n}\ge0\\
\\
u_{\varepsilon_{j}}\left(\psi_{P^{1}}^{\partial}(Q_{\varepsilon_{j}}^{1}+\varepsilon_{j}z^{\tau})\right) & \text{ for }z_{n}<0
\end{array}\right.
\]
where $z^{\tau}=(z_{1},\dots,z_{n-1},-z_{n})$, and $z\in\mathbb{R}^{n}$
sufficiently small such that the Fermi coordinates $\psi_{P^{1}}^{\partial}$
are well defined. In the same way we define $v_{j}^{2}$. At this
point we can proceed as in \cite{GMkg} and we can prove that for
any bounded set $B$ eventually $v_{j}^{i}\in C^{2}(B)$ and $v_{j}^{i}\xrightarrow{j}U$
in $C^{2}(B)$, where $U$ is the positive, radially symmetric least
energy solution of (\ref{eq:U}). Now choose $\bar{R}$ such that
\[
\int_{B(0,\bar{R})}|\nabla U|^{2}+(a-\omega^{2})U^{2}>\frac{2p}{p-2}\cdot\frac{m_{\infty}+2\delta}{2}.
\]
 For $\varepsilon_{j}$ sufficiently small, we have that $\varepsilon_{j}\bar{R}\le\frac{d_{g}(P^{1},P^{2})}{2}$,
thus 
\begin{align}\label{eq:2minfty}
2I_{\varepsilon_{j}}(u_{\varepsilon_{j}}) \ge &
 2\left(\frac{1}{2}-\frac{1}{p}\right)\|u_{\varepsilon_{j}}\|_{\varepsilon_{j}}^{2}\nonumber\\
  \ge&  2\left(\frac{1}{2}-\frac{1}{p}\right)\frac{1}{\varepsilon_{j}^{n}}
 \int_{I_{P^{1}}(\varepsilon_{j}\bar{R})\cup I_{P^{2}}(\varepsilon_{j}\bar{R})}\varepsilon^{2}|\nabla_{g}u_{\varepsilon_{j}}|^{2}
 +(a-\omega^{2})u_{\varepsilon_{j}}^{2} \nonumber\\
  \ge & 2\left(\frac{1}{2}-\frac{1}{p}\right)\int_{B(0,\bar{R})\cap z_{n}\ge0}|\nabla v_{j}^{1}(z)|^{2}+(a-\omega^{2})(v_{j}^{1})^{2}
\nonumber \\
 &+2\left(\frac{1}{2}-\frac{1}{p}\right)\int_{B(0,\bar{R})\cap z_{n}\ge0}|\nabla v_{j}^{2}(z)|^{2}+(a-\omega^{2})|v_{j}^{2}|^{2} +o(1)\\ 
  =& \left(\frac{1}{2}-\frac{1}{p}\right)\int_{B(0,\bar{R})}|\nabla v_{j}^{1}(z)|^{2}+(a-\omega^{2})|v_{j}^{1}|^{2}
 \nonumber \\
 &+\left(\frac{1}{2}-\frac{1}{p}\right)\int_{B(0,\bar{R})}|\nabla v_{j}^{2}(z)|^{2}+(a-\omega^{2})|v_{j}^{2}|^{2}+o(1)\nonumber \\
  \rightarrow & 2\left(\frac{1}{2}-\frac{1}{p}\right)\int_{B(0,\bar{R})}|\nabla U|^{2}+(a-\omega^{2})U^{2}>m_{\infty}+2\delta\nonumber 
\end{align}
and thus $I_{\varepsilon_{j}}(u_{\varepsilon_{j}})>m_{e}^{+}+2\delta$
that is a contradiction. 

Now we have that $d_{g}(P_{\varepsilon}^{1},P_{\varepsilon}^{2})\rightarrow0$.
With the same technique we can prove also that 
\begin{equation}
\lim_{j\rightarrow\infty}\frac{1}{\varepsilon_{j}}d_{g}(P_{\varepsilon_{j}}^{1},P_{\varepsilon_{j}}^{2})=0\label{eq:limmax}
\end{equation}

To conclude the proof we have to show that (\ref{eq:limmax}) raises
to a contradiction. In fact suppose that $d_{g}(P_{\varepsilon_{j}}^{1},P_{\varepsilon_{j}}^{2})\le c\varepsilon_{j}$
for some $c>0$ and consider the sequence of functions 
\begin{equation}
w_{\varepsilon_{j}}=\left\{ \begin{array}{cc}
u_{\varepsilon_{j}}\left(\psi_{P^{1}}^{\partial}(Q_{\varepsilon_{j}}^{1}+\varepsilon_{j}z)\right) & \text{ for }z_{n}\ge0\\
\\
u_{\varepsilon_{j}}\left(\psi_{P^{1}}^{\partial}(Q_{\varepsilon_{j}}^{1}+\varepsilon_{j}z^{\tau})\right) & \text{ for }z_{n}<0
\end{array}\right.\text{ with }|z|\le c.\label{eq:weps}
\end{equation}
For any $j$, $w_{\varepsilon_{j}}$ has two maximum points in $B(0,c)$.
Moreover, we can argue, as in the previous steps, that $w_{\varepsilon_{j}}\rightarrow U$
in $C^{2}(B(0,c))$ and this is a contradiction.\end{proof}
\begin{lem}
\label{lem:stime}Write $u_{\varepsilon}=Z_{\varepsilon,P_{\varepsilon}}+\Psi_{\varepsilon}$
where $Z_{\varepsilon,P_{\varepsilon}}$ is defined in (\ref{zeq})
and $P_{\varepsilon}\in\partial M$ is the unique maximum point. It
holds that $\|\Psi_{\varepsilon}\|_{L^{\infty}(M)}\rightarrow0$.
\end{lem}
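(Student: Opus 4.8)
The plan is to split $M$ into a region close to the concentration point $P_{\varepsilon}$, where the rescaled solution is already under control, and a far region, where both $u_{\varepsilon}$ and the model $Z_{\varepsilon,P_{\varepsilon}}$ are uniformly small. Fix $\eta>0$; I will produce a radius $L>0$ and an $\varepsilon_{0}>0$ so that $\|\Psi_{\varepsilon}\|_{L^{\infty}(M)}<\eta$ for every $\varepsilon<\varepsilon_{0}$.

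\emph{Near region.} Working in Fermi coordinates at $P_{\varepsilon}$ and using the even reflection across $\{z_{n}=0\}$ as in the proof of Lemma~\ref{lem:duemax}, set $v_{\varepsilon}(z)=u_{\varepsilon}(\Psi_{P_{\varepsilon}}^{\partial}(\varepsilon z))$. The blow-up analysis already carried out there shows that $v_{\varepsilon}\to U$ in $C^{2}(B(0,L))$ for every fixed $L$, where $U$ is the ground state of (\ref{eq:U}); restricted to $\{z_{n}\ge0\}$ this limit is exactly the profile $V$ entering (\ref{zeq}). Since for $\varepsilon$ small the cut-off factors in (\ref{zeq}) are identically $1$ on $B(0,L)$ (they only switch on at $|z|$ of order $1/\varepsilon\gg L$), the rescaling of $Z_{\varepsilon,P_{\varepsilon}}$ coincides with $V$ there, whence
\[
\sup_{d_{g}(x,P_{\varepsilon})\le\varepsilon L}|\Psi_{\varepsilon}(x)|=\sup_{|z|\le L}|v_{\varepsilon}(z)-V(z)|\longrightarrow 0\quad(\varepsilon\to0).
\]
Thus, for any fixed $L$, this quantity is $<\eta$ once $\varepsilon$ is small.

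\emph{Far region.} Two ingredients combine. First, the ground state $U$, hence $V$, decays exponentially, so I may fix $L$ large enough that $\sup_{|z|\ge L}V(z)<\eta/3$; by (\ref{zeq}) this gives $Z_{\varepsilon,P_{\varepsilon}}(x)<\eta/3$ whenever $d_{g}(x,P_{\varepsilon})\ge\varepsilon L$. Second, I claim that, enlarging $L$ if necessary, $u_{\varepsilon}(x)<\eta/3$ for every $x$ with $d_{g}(x,P_{\varepsilon})\ge\varepsilon L$ and every $\varepsilon$ small. Granting the claim, on the far region $|\Psi_{\varepsilon}|\le u_{\varepsilon}+Z_{\varepsilon,P_{\varepsilon}}<\eta$, and together with the near-region estimate this proves the Lemma.

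The claim, namely the uniform smallness of $u_{\varepsilon}$ outside an $\varepsilon L$-ball, is the heart of the matter, and I would prove it by contradiction through a second blow-up, in the spirit of the energy-doubling argument of Lemma~\ref{lem:duemax}. If it failed there would exist $\eta_{0}>0$, $\varepsilon_{j}\to0$ and points $x_{j}$ with $d_{g}(x_{j},P_{\varepsilon_{j}})/\varepsilon_{j}\to\infty$ and $u_{\varepsilon_{j}}(x_{j})\ge\eta_{0}$. Rescaling $u_{\varepsilon_{j}}$ about $x_{j}$ (with the boundary reflection when $x_{j}$ is near $\partial M$) yields in the limit a nontrivial solution of the limiting equation, i.e.\ a second bubble carrying energy at least $m_{e}^{+}$ (and $m_{\infty}=2m_{e}^{+}$ should $x_{j}$ drift into the interior). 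Since $d_{g}(x_{j},P_{\varepsilon_{j}})/\varepsilon_{j}\to\infty$, the two bubbles have asymptotically disjoint supports at scale $\varepsilon_{j}$, so their energies add and $I_{\varepsilon_{j}}(u_{\varepsilon_{j}})\ge 2m_{e}^{+}$ in the limit, contradicting the hypothesis $I_{\varepsilon}(u_{\varepsilon})\le m_{e}^{+}+\delta<2m_{e}^{+}$. The main technical obstacle is precisely making this energy splitting rigorous, in particular controlling the interaction on the neck joining the two scales and the contribution of the lower-order term $\psi(u_{\varepsilon})$; this is handled exactly as in the estimate (\ref{eq:2minfty}) and in the concentration analysis of Proposition~\ref{propconc}, to which I would appeal.
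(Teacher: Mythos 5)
Your proof is correct, and the near-region half is exactly the paper's: both rely on the $C^{2}$ convergence $v_{\varepsilon}\to U$ (hence $V$ on the half-space) from Lemma \ref{lem:duemax}, plus the fact that the cut-offs in (\ref{zeq}) are identically $1$ at scale $\varepsilon L$. Where you genuinely diverge is the far region. The paper disposes of it in two lines by exploiting the \emph{uniqueness} of the local maximum point established in Lemma \ref{lem:duemax}: since $P_{\varepsilon}$ is the only local maximum, the supremum of $u_{\varepsilon}$ over $M\smallsetminus I_{P_{\varepsilon}}(\varepsilon\rho)$ must be attained on $\partial I_{P_{\varepsilon}}(\varepsilon\rho)$, where the blow-up convergence gives $\max_{|z|=\rho}U(z)+\sigma(\varepsilon)\le ce^{-\alpha\rho}+\sigma_{1}(\varepsilon)$; no second blow-up is needed. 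You instead run a contradiction argument with a second rescaling at a putative far point $x_{j}$ and an energy-doubling estimate against the hypothesis $I_{\varepsilon}(u_{\varepsilon})<2m_{e}^{+}$, in the spirit of (\ref{eq:2minfty}) and Proposition \ref{propconc}. This is heavier — you must justify $C^{0}_{\mathrm{loc}}$ convergence at the second point to get a nontrivial limit, and make the splitting of $\|u_{\varepsilon_{j}}\|_{\varepsilon_{j}}^{2}$ over two asymptotically disjoint $\varepsilon_{j}$-balls rigorous (both doable with the tools already in the paper, since $I_{\varepsilon}(u)\ge\left(\tfrac{1}{2}-\tfrac{1}{p}\right)\|u\|_{\varepsilon}^{2}$ for $p\ge4$) — but it buys something: it uses only the energy bound and not the uniqueness of the maximum, so it would survive in situations where that uniqueness is unavailable. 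The paper's route is shorter and in addition delivers the explicit exponential smallness $ce^{-\alpha\rho}$ of $u_{\varepsilon}$ outside $I_{P_{\varepsilon}}(\varepsilon\rho)$, which your argument only gives as a qualitative $\eta/3$ bound.
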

\begin{proof}
By the $C^{2}$ convergence proved in Lemma \ref{lem:duemax} we have
that, given $\rho>0$, and defined $w_{\varepsilon}$ as in (\ref{eq:weps}),
we get, as before, 
\[
2\|u_{\varepsilon}-Z_{\varepsilon,P_{\varepsilon}}\|_{C^{0}(I_{P_{\varepsilon}}(\varepsilon\rho))}
=\|w_{\varepsilon}(z)-U(z)\|_{C^{0}(B(0,\rho))}+o(1)\rightarrow0
\]
 as $\varepsilon\rightarrow0$. Moreover, since $u_{\varepsilon}$
has a unique maximum point by Lemma \ref{lem:duemax}, we have that,
for any $\rho>0$, 
\[
\max_{x\in M\smallsetminus I_{P_{\varepsilon}}(\varepsilon\rho)}u_{\varepsilon}(x)=\max_{x\in\partial I_{P_{\varepsilon}}(\varepsilon\rho)}u_{\varepsilon}(x)=\max_{|z|=\rho}U(z)+\sigma(\varepsilon)\le ce^{-\alpha\rho}+\sigma_{1}(\varepsilon)
\]
 for some constant $c,\alpha>0$ and for some $\sigma_{1}(\varepsilon)\rightarrow0$
for $\varepsilon\rightarrow0$. This proves the claim.
\end{proof}

\section{\label{sec:Proof-of-technical}Proof of technical results}

Here we collect some technical result which has been used in the proof
of the main result.
\begin{proof}[Proof of Lemma \ref{lem:nehari}]
 If $u\in\mathcal{N}_{\varepsilon}$, , by (\ref{eq:gprimo}), we
have 
\begin{eqnarray}
0=N_{\varepsilon}(u) & = & \|u\|_{\varepsilon}^{2}-|u^{+}|_{\varepsilon,p}^{p}+\frac{q\omega^{2}}{\varepsilon^{n}}\int_{M}\left(2-q\psi(u)\right)\psi(u)u^{2}d\mu_{g}\nonumber \\
 & = & \|u\|_{\varepsilon}^{2}-|u^{+}|_{\varepsilon,p}^{p}+\frac{q\omega^{2}}{2\varepsilon^{n}}\int_{M}\left(2\psi(u)+\psi'(u)[u]\right)u^{2}d\mu_{g}.\label{eq:N(u)}
\end{eqnarray}
 The functional $N_{\varepsilon}$ is of class $C^{2}$ for $2<p<2^{*}$
because $\psi$ is of class $C^{2}$. Also, for $4\le p<2^{*}$ we
have $N'_{\varepsilon}(u)[u]<0$ for all $u\in{\mathcal{N}}_{\varepsilon}$.
In fact by (\ref{eq:N(u)}) we have

\begin{eqnarray}
N'_{\varepsilon}(u)[u] & = & 2\|u\|_{\varepsilon}^{2}-p|u^{+}|_{\varepsilon,p}^{p}+\frac{q\omega^{2}}{\varepsilon^{n}}\int_{M}\left(2-q\psi(u)\right)\psi'(u)[u]u^{2}d\mu_{g}\nonumber \\
 &  & +\frac{2q\omega^{2}}{\varepsilon^{n}}\int_{M}\left(2-q\psi(u)\right)\psi(u)u^{2}d\mu_{g}-\frac{q^{2}\omega^{2}}{\varepsilon^{n}}\int_{M}\psi'(u)[u]\psi(u)u^{2}d\mu_{g}=\nonumber \\
 & = & (2-p)\|u\|_{\varepsilon}^{2}+\frac{q\omega^{2}}{\varepsilon^{n}}\int_{M}[4-p-2q\psi(u)]\psi(u)u^{2}d\mu_{g}\nonumber \\
 &  & +\frac{q\omega^{2}}{\varepsilon^{n}}\int_{M}\left[2-\frac{p}{2}-2q\psi(u)\right]\psi'(u)[u]u^{2}d\mu_{g}<0\text{ for }p\ge4,\label{eq:nehari}
\end{eqnarray}
thus $\mathcal{N}_{\varepsilon}$ is a $C^{2}$ manifold.

Now, assume by contradiction that there exists a sequence $\left\{ u_{k}\right\} _{k}\in\mathcal{N}_{\varepsilon}$
with $\|u_{k}\|_{\varepsilon}\rightarrow0$ while $k\rightarrow+\infty$.
Thus, using that $N_{\varepsilon}(u)=0$ and that $0\le\psi(u_{k})\le1/q$
we have 
\[
\|u_{k}\|_{\varepsilon}^{2}\le\|u_{k}\|_{\varepsilon}^{2}+\frac{q\omega^{2}}{\varepsilon^{n}}\int_{M}[2-q\psi(u_{k})]u_{k}^{2}\psi(u_{k})d\mu_{g}=|u_{k}^{+}|_{p,\varepsilon}^{p}\le C\|u_{k}\|_{\varepsilon}^{p},
\]
 so $1\le C\|u_{k}\|_{\varepsilon}^{p-2}\rightarrow0$ that gives
us a contradiction, so claim 1 is proved.

To prove claim 2, first, we show that if $\left\{ u_{k}\right\} _{k}\in\mathcal{N}_{\varepsilon}$
is a Palais-Smale sequence for the functional $I_{\varepsilon}$ constrained
on $\mathcal{N}_{\varepsilon}$, then $\left\{ u_{k}\right\} _{k}$
is a is a Palais-Smale sequence for the free functional $I_{\varepsilon}$
on $H_{\varepsilon}$ 

Indeed, let $\left\{ u_{k}\right\} _{k}\in\mathcal{N}_{\varepsilon}$
such that 
\[
\begin{array}{cc}
I_{\varepsilon}(u_{k})\rightarrow c\\
\left|I'_{\varepsilon}(u_{k})[\varphi]-\lambda_{k}N'(u_{k})[\varphi]\right|\le\sigma_{k}\|\varphi\|_{\varepsilon} & \text{ with }\sigma_{k}\rightarrow0
\end{array}
\]
 In particular $I'_{\varepsilon}(u_{k})\left[\frac{u_{k}}{\|u_{k}\|_{\varepsilon}}\right]-\lambda_{k}N'(u_{k})\left[\frac{u_{k}}{\|u_{k}\|_{\varepsilon}}\right]\rightarrow0$.
Thus, since $u_{k}\in\mathcal{N}_{\varepsilon}$,
\[
\lambda_{k}N'(u_{k})\left[\frac{u_{k}}{\|u_{k}\|_{\varepsilon}}\right]\rightarrow0.
\]
 By (\ref{eq:nehari}), if $\inf|\lambda_{k}|\ne0$, we have that
$\|u_{k}\|_{\varepsilon}\rightarrow0$ that contradicts Lemma \ref{lem:nehari}.Thus
$\lambda_{k}\rightarrow0$. Moreover, since 
\[
I_{\varepsilon}(u_{k})=\left(\frac{1}{2}-\frac{1}{p}\right)\|u_{k}\|_{\varepsilon}^{2}+\left(\frac{1}{2}-\frac{2}{p}\right)\frac{\omega^{2}q}{\varepsilon^{3}}\int_{M}u_{k}^{2}\psi_{k}d\mu_{g}+\frac{\omega^{2}q^{2}}{\varepsilon^{n}p}\int_{M}u_{k}^{2}\psi_{k}^{2}d\mu_{g}\rightarrow c,
\]
we have that $\|u_{n}\|_{\varepsilon}$ is bounded. By Remark \ref{rem:Vh}
we have that $|N'(u_{n})[\varphi]|\le c\|\varphi\|_{\varepsilon}$.
Thus $\left\{ u_{k}\right\} _{k}$ is a PS sequence for the free functional
$I_{\varepsilon}$. 

To conclude the proof of claim 2, we prove that $I_{\varepsilon}$
safisfies the PS condition on the whole space $H_{\varepsilon}$.
Let $\left\{ u_{k}\right\} _{k}\in H_{\varepsilon}$ such that 
\begin{eqnarray*}
I_{\varepsilon}(u_{k})\rightarrow c &  & \left|I'_{\varepsilon}(u_{k})[\varphi]\right|\le\sigma_{k}\|\varphi\|_{\varepsilon}\text{ where }\sigma_{k}\rightarrow0
\end{eqnarray*}
 We have that $\|u_{k}\|_{\varepsilon}$ is bounded. Indeed, by contradiction,
suppose $\|u_{n}\|_{\varepsilon}\rightarrow\infty$. Then, by PS hypothesis
\begin{multline*}
\frac{pI_{\varepsilon}(u_{k})-I'_{\varepsilon}(u_{k})[u_{k}]}{\|u_{k}\|_{\varepsilon}}=\\
\left(\frac{p}{2}-1\right)\|u_{k}\|_{\varepsilon}+\frac{q\omega^{2}}{\varepsilon^{n}}\int_{M}\left[\frac{p}{2}-2+q\psi(u_{k})\right]\frac{u_{k}^{2}\psi(u_{k})}{\|u_{k}\|_{\varepsilon}}d\mu_{g}\rightarrow0
\end{multline*}
 Since $p\ge4$ and $\psi(u_{n})\ge0$ this leads to a contradiction.
At this point, up to subsequence $u_{k}\rightharpoonup u$ in $H_{\varepsilon}$,
then by Lemma \ref{lem:w-psi} we have, up to subsequence, $\psi(u_{k}):=\psi_{k}\rightharpoonup\bar{\psi}=\psi(u)$.

We have that 
\[
u_{k}-i_{\varepsilon}^{*}[(u_{k}^{+})^{p-1}]-\omega^{2}qi_{\varepsilon}^{*}\left[\left(q\psi_{k}^{2}-2\psi_{k}\right)u_{k}\right]\rightarrow0
\]
where the operator $i_{\varepsilon}^{*}:L_{g}^{p'},|\cdot|_{\varepsilon,p'}\rightarrow H_{\varepsilon}$
is the adjoint operator of the immersion operator $i_{\varepsilon}:H_{\varepsilon}\rightarrow L_{g}^{p},|\cdot|_{\varepsilon,p}$.
Since $u_{k}\rightarrow u$ in $L^{p'}$, to get $H_{g}^{1}$ strong
convergence of $\{u_{k}\}_{k}$ it is sufficient to show that $\left(q\psi_{k}^{2}-2\psi_{k}\right)u_{n}\rightarrow\left(q\bar{\psi}^{2}-2\bar{\psi}\right)u$
in $L_{g}^{p'}$. We have 
\begin{equation}
|\psi_{k}u_{k}-\bar{\psi}u|_{p',g}\le|(\psi_{k}-\bar{\psi})u|_{p',g}+|\psi_{k}(u_{k}-u)|_{p',g}.\label{eq:1}
\end{equation}
 and 
\begin{equation}
|\psi_{k}^{2}u_{k}-\bar{\psi}^{2}u|_{p',g}\le|(\psi_{k}^{2}-\bar{\psi}^{2})u|_{p',g}+|\psi_{k}^{2}(u_{k}-u)|_{p',g}.\label{eq:2}
\end{equation}
 For the first term of (\ref{eq:1}) we have, by Holder inequality
\[
\int_{M}|\psi_{k}-\bar{\psi}|^{\frac{p}{p-1}}|u|^{\frac{p}{p-1}}\le\left(\int_{M}|\psi_{k}-\bar{\psi}|^{p}\right)^{\frac{1}{p-1}}\left(\int_{M}|u|^{\frac{p}{p-2}}\right)^{\frac{p-2}{p-1}}\rightarrow0,
\]
 and for the other terms we proceed in the same way. 

To prove claim 3, define, for $t>0$ 
\[
H(t)=I_{\varepsilon}(tu)=\frac{1}{2}t^{2}\|u\|_{\varepsilon}^{2}+\frac{q\omega^{2}}{2\varepsilon^{n}}t^{2}\int_{M}\psi(tu)u^{2}d\mu_{g}-\frac{t^{p}}{p}.
\]
 Thus, by (\ref{eq:gprimo}) 
\begin{eqnarray}
H'(t) & = & t\left(\|u\|_{\varepsilon}^{2}+\frac{q\omega^{2}}{2\varepsilon^{n}}\int_{M}[2-q\psi(tu)]\psi(tu)u^{2}d\mu_{g}-t^{p-2}\right)\nonumber\\
 & = & t\left(\|u\|_{\varepsilon}^{2}+\frac{q\omega^{2}}{\varepsilon^{n}}\int_{M}\psi(tu)u^{2}d\mu_{g}+\frac{q\omega^{2}}{2\varepsilon^{n}}t\int_{M}\psi'(tu)[u]u^{2}d\mu_{g}-t^{p-2}\right)\label{eq:Hprimo} \\
H''(t) & = & \|u\|_{\varepsilon}^{2}+\frac{q\omega^{2}}{2\varepsilon^{n}}\int_{M}[2-q\psi(tu)]\psi(tu)u^{2}d\mu_{g}\nonumber\\
 &  & +\frac{q\omega^{2}}{\varepsilon^{n}}t\int_{M}[1-q\psi(tu)]\psi'(tu)[u]u^{2}d\mu_{g}-(p-1)t^{p-2}\label{eq:Hsec} 
\end{eqnarray}
 By (\ref{eq:Hprimo}) there exists $t_{\varepsilon}>0$ such that
$H'(t_{\varepsilon})=0$, because, for small $t$, $H'(t)>0$ and,
since $p\ge4$, it holds $H'(t)<0$ for $t$ large. Moreover, 
\[
t_{\varepsilon}^{p-2}=\|u\|_{\varepsilon}^{2}+\frac{q\omega^{2}}{\varepsilon^{n}}\int_{M}\psi(t_{\varepsilon}u)u^{2}d\mu_{g}+\frac{q\omega^{2}}{2\varepsilon^{n}}t_{\varepsilon}\int_{M}\psi'(t_{\varepsilon}u)[u]u^{2}d\mu_{g}
\]
 then, by Lemma \ref{lem:e1} 
\begin{eqnarray*}
H''(t_{\varepsilon}) & = & (2-p)\|u\|_{\varepsilon}^{2}+\frac{q\omega^{2}}{\varepsilon^{n}}\int_{M}\left[2-p-\frac{q}{2}\psi(t_{\varepsilon}u)\right]\psi(t_{\varepsilon}u)u^{2}d\mu_{g}\\
 &  & +\frac{q\omega^{2}}{2\varepsilon^{n}}\int_{M}[3-p-2q\psi(t_{\varepsilon}u)]\psi'(t_{\varepsilon}u)[t_{\varepsilon}u]u^{2}d\mu_{g}<0,
\end{eqnarray*}
 so $t_{\varepsilon}$ is unique. The continuity of $t_{\varepsilon}$
is standard. 

We now prove the last claim. We have 
\begin{multline}
t_{\varepsilon}^{p-2}|Z_{\varepsilon,\xi}|_{\varepsilon,p}^{p}=\|Z_{\varepsilon,\xi}\|_{\varepsilon}^{2}+\frac{q\omega^{2}}{\varepsilon^{n}}\int_{M}\psi(t_{\varepsilon}Z_{\varepsilon,\xi})Z_{\varepsilon,\xi}^{2}d\mu_{g}\\
-\frac{q^{2}\omega^{2}}{2\varepsilon^{n}}\int_{M}\psi^{2}(t_{\varepsilon}Z_{\varepsilon,\xi})Z_{\varepsilon,\xi}^{2}d\mu_{g}\label{eq:teps1-1}
\end{multline}
 where $t_{\varepsilon}=t_{\varepsilon}(Z_{\varepsilon,q})$. It holds
\begin{eqnarray}
 &  & \lim_{\varepsilon\rightarrow0}\frac{1}{\varepsilon^{n}t_{\varepsilon}^{2}}\int_{M}\psi(t_{\varepsilon}Z_{\varepsilon,\xi})Z_{\varepsilon,\xi}^{2}d\mu_{g}=0\label{eq:lim1}\\
 &  & \lim_{\varepsilon\rightarrow0}\frac{1}{\varepsilon^{n}t_{\varepsilon}^{4}}\int_{M}\psi^{2}(t_{\varepsilon}Z_{\varepsilon,\xi})Z_{\varepsilon,\xi}^{2}d\mu_{g}=0\label{eq:lim2}
\end{eqnarray}
 In fact, set $\psi(t_{\varepsilon}Z_{\varepsilon,\xi}):=\psi_{\varepsilon}$.
We have, by Remark \ref{remark:Zeps} and by definition of $\psi_{\varepsilon}$,
\begin{eqnarray*}
\|\psi_{\varepsilon}\|_{H}^{2} & \le & \|\psi_{\varepsilon}\|_{H}^{2}+q^{2}\int_{M}\psi_{\varepsilon}^{2}t_{\varepsilon}^{2}Z_{\varepsilon,\xi}^{2}d\mu_{g}=t_{\varepsilon}^{2}q\int_{M}Z_{\varepsilon,\xi}^{2}\psi_{\varepsilon}d\mu_{g}\le\\
 & \le & ct_{\varepsilon}^{2}|\psi_{\varepsilon}|_{6,g}\left(\int_{M}Z_{\varepsilon,\xi}^{12/5}d\mu_{g}\right)^{5/6}\le ct_{\varepsilon}^{2}\|\psi_{\varepsilon}\|_{H_{g}^{1}}\varepsilon^{\frac{5n}{6}}.
\end{eqnarray*}
 Moreover 
\[
\frac{1}{\varepsilon^{n}}\int_{M}\psi_{\varepsilon}Z_{\varepsilon,\xi}^{2}d\mu_{g}\le\frac{1}{\varepsilon^{n}}\|\psi_{\varepsilon}\|_{H_{g}^{1}}\left(\int_{M}Z_{\varepsilon,\xi}^{12/5}d\mu_{g}\right)^{5/6}\le ct_{\varepsilon}^{2}\frac{1}{\varepsilon^{n}}\varepsilon^{\frac{10n}{6}}=ct_{\varepsilon}^{2}\varepsilon^{\frac{2n}{3}},
\]
 and 
\[
\frac{1}{\varepsilon^{n}}\int_{M}\psi_{\varepsilon}^{2}Z_{\varepsilon,\xi}^{2}d\mu_{g}\le\frac{1}{\varepsilon^{n}}\left(\int\psi_{\varepsilon}^{6}d\mu_{g}\right)^{1/3}\left(\int_{M}Z_{\varepsilon,\xi}^{3}d\mu_{g}\right)^{2/3}\le\frac{1}{\varepsilon^{n}}\|\psi_{\varepsilon}\|_{H_{g}^{1}}^{2}\varepsilon^{\frac{2n}{3}}\le t_{\varepsilon}^{4}\varepsilon^{\frac{4n}{3}}.
\]
This proves (\ref{eq:lim1}) and (\ref{eq:lim2}). For any sequence
$\varepsilon_{k}\rightarrow0$, by (\ref{eq:teps1-1}), (\ref{eq:lim1})
and (\ref{eq:lim2}) and by Remark \ref{remark:Zeps} we have that
$t_{\varepsilon_{k}}$ is bounded. Then, up to subsequences $t_{\varepsilon_{k}}\rightarrow\bar{t}$.
By (\ref{eq:teps1-1}) and Remark \ref{remark:Zeps} we have $\bar{t}^{p-2}|V|_{p}^{p}=\int_{\mathbb{R}_{+}^{n}}|\nabla V|^{2}+(a-\omega^{2})V^{2}dx$.
By (\ref{eq:V}) we complete the proof. \end{proof}
\begin{lem}
\label{lem:w-psi}Let $u_{k}\rightharpoonup u$ in $H_{g}^{1}(M)$.
Then, up to subsequence, $\psi(u_{k})\rightharpoonup\psi(u)$ in $H_{g}^{1}(M)$.\end{lem}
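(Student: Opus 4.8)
The plan is to combine a uniform energy bound for $\psi(u_k)$ with the compactness of the Sobolev embedding in dimension $n=2,3$, and then to identify the weak limit by passing to the limit in the weak formulation of (\ref{eq:ei-N}).

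First I would show that $\{\psi(u_k)\}_k$ is bounded in $H$. Writing $\psi_k:=\psi(u_k)$ and testing (\ref{eq:ei-N}) with $\psi_k$ itself gives
\begin{equation*}
\int_M |\nabla_g\psi_k|^2 + (1+q^2u_k^2)\psi_k^2\, d\mu_g = q\int_M u_k^2\psi_k\, d\mu_g.
\end{equation*}
Since $0\le\psi_k\le 1/q$ by (\ref{psipos}), the right-hand side is bounded by $\int_M u_k^2\, d\mu_g$, which is bounded because $u_k\rightharpoonup u$ forces $\{u_k\}_k$ to be bounded in $H_g^1(M)$. As the left-hand side dominates $\|\psi_k\|^2_{H_g^1}$ (using $1+q^2u_k^2\ge1$), this yields the claimed bound. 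In the Dirichlet case one argues identically from (\ref{eq:ei-D}), using the Poincar\'e inequality to control $\|\psi_k\|_{H_{0,g}^1}$ by $\int_M|\nabla_g\psi_k|^2\, d\mu_g$.

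Consequently, up to a subsequence, $\psi_k\rightharpoonup\bar\psi$ weakly in $H$, and by the Rellich--Kondrachov theorem strongly in $L_g^s(M)$ for every $s<2^*$ and almost everywhere; the pointwise bound $0\le\bar\psi\le1/q$ therefore passes to the limit as well. It then remains to identify $\bar\psi$ with $\psi(u)$, which I would do by passing to the limit in the weak formulation: for every $\varphi\in C^\infty(M)$,
\begin{equation*}
\int_M \nabla_g\psi_k\nabla_g\varphi + (1+q^2u_k^2)\psi_k\varphi\, d\mu_g = q\int_M u_k^2\varphi\, d\mu_g.
\end{equation*}
The gradient term and the linear part $\int_M\psi_k\varphi\,d\mu_g$ pass to the limit by weak convergence of $\psi_k$ in $H$ and strong convergence in $L_g^2(M)$; the right-hand side converges because $u_k\to u$ in $L_g^4(M)$ (compact embedding, valid since $4<2^*$ for $n=2,3$), hence $u_k^2\to u^2$ in $L_g^2(M)$.

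The main obstacle is the nonlinear term $q^2\int_M u_k^2\psi_k\varphi\, d\mu_g$, a product of the merely weakly convergent factor $\psi_k$ with the nonlinearly transformed $u_k^2$. I would handle it by the splitting
\begin{equation*}
u_k^2\psi_k - u^2\bar\psi = (u_k^2-u^2)\psi_k + u^2(\psi_k-\bar\psi);
\end{equation*}
the first summand tends to $0$ in $L_g^2(M)$ since $\|u_k^2-u^2\|_{L^2}\to0$ and $|\psi_k|\le1/q$, while the second tends to $0$ in $L_g^2(M)$ by dominated convergence, using $\psi_k\to\bar\psi$ almost everywhere, the bound $|u^2(\psi_k-\bar\psi)|\le \tfrac{2}{q}\,u^2\in L_g^2(M)$, and $u\in L_g^4(M)$. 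Thus $u_k^2\psi_k\to u^2\bar\psi$ in $L_g^2(M)$, and passing to the limit shows that $\bar\psi$ is a weak solution of $-\Delta_g\bar\psi+(1+q^2u^2)\bar\psi=qu^2$ with the Neumann condition (respectively the Dirichlet analogue). By uniqueness of the solution of this linear problem, $\bar\psi=\psi(u)$, which proves the claim; since the limit does not depend on the subsequence, the full sequence converges.
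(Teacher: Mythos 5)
Your proof is correct and follows essentially the same route as the paper: a uniform $H^1$ bound for $\psi_k$ obtained by testing the defining equation with $\psi_k$, extraction of a weak limit $\bar\psi$, and identification $\bar\psi=\psi(u)$ by passing to the limit in the weak formulation and invoking uniqueness. The only difference is that the paper states the passage to the limit in one line, whereas you supply the justification for the nonlinear term $u_k^2\psi_k$ via the splitting $(u_k^2-u^2)\psi_k+u^2(\psi_k-\bar\psi)$, which is a correct filling-in of a step the paper leaves implicit.
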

\begin{proof}
We set $\psi_{k}:=\psi(u_{k})$. By (\ref{eq:ei-N}), it holds 
\begin{eqnarray*}
\|\psi_{k}\|_{H_{g}^{1}}^{2} & \le & \|\psi_{k}\|_{H_{g}^{1}}^{2}+\int_{M}q^{2}u_{k}^{2}\psi_{k}^{2}d\mu_{g}=q\int_{M}u_{k}^{2}\psi_{k}d\mu_{g}\le c\|u_{k}\|_{L_{g}^{4}}^{2}\|\psi_{k}\|_{H_{g}^{1}}
\end{eqnarray*}
 then $\|\psi_{k}\|_{H_{g}^{1}}\le c\|u_{k}\|_{L_{g}^{4}}^{2}$, thus
$\|\psi_{k}\|_{H_{g}^{1}}$ is bounded and, up to subsequence, $\psi_{k}\rightharpoonup\bar{\psi}$
in $H_{g}^{1}(M)$. We recall that $\psi_{k}$ solves (\ref{eq:ei-N}),
thus passing to the limit we have that $\bar{\psi}$ also solves (\ref{eq:ei-N}).
Since (\ref{eq:ei-N}) admits a unique solution, we get $\bar{\psi}=\psi(u)$.
If $\psi(u_{k})$ solves (\ref{eq:ei-D}) the proof follows in the
same way if we use on $H_{0,g}^{1}$ the equivalent norm $\|u\|_{H_{0,g}^{1}}=\|\nabla u\|_{L_{g}^{2}}$.\end{proof}
\begin{rem}
\label{rem:Vh}We have that $\|V_{u}(h)\|_{H}\le c|h|_{3,g}|u|_{3,g}$.
In fact, by Lemma \ref{lem:e1} 
\begin{eqnarray*}
\|V_{u}(h)\|_{H}^{2} & \le & \|V_{u}(h)\|_{H}^{2}+\int_{M}q^{2}u^{2}V_{u}^{2}(h)d\mu_{g}\le\\
 & \le & \int_{M}2qu(1-q\psi(u))hV_{u}(h)d\mu_{g}\le c\|V_{u}(h)\|_{H}|h|_{3,g}|u|_{3,g}.
\end{eqnarray*}

\end{rem}

\begin{rem}
\label{remark:Zeps}the following limits hold uniformly with respect
to $q\in\partial M$. 
\begin{equation}
\lim_{\varepsilon\rightarrow0}\left\vert \left\vert Z_{\varepsilon,\xi}\right\vert \right\vert _{2,\varepsilon}^{2}=\int_{\mathbb{R}_{+}^{n}}V^{2}(y)dy\label{eql2-1}
\end{equation}
\begin{equation}
\lim_{\varepsilon\rightarrow0}\left\vert \left\vert Z_{\varepsilon,\xi}\right\vert \right\vert _{p,\varepsilon}^{p}=\int_{\mathbb{R}_{+}^{n}}V^{p}(y)dy\label{eqlp-1}
\end{equation}
\begin{equation}
\lim_{\varepsilon\rightarrow0}\varepsilon^{2}\left\vert \left\vert \nabla Z_{\varepsilon,\xi}\right\vert \right\vert _{2,\varepsilon}^{2}=\int_{\mathbb{R}_{+}^{n}}\left\vert \nabla V\right\vert ^{2}(y)dy\label{eqgrad-1}
\end{equation}

\end{rem}

\end{document}